\newenvironment{enumi}{\begin{enumerate}[\upshape \hspace{0.25cm} (i)]}{\end{enumerate}}
\newenvironment{enuma}{\begin{enumerate}[\upshape \hspace{0.5cm} (a)]}{\end{enumerate}}
\newcommand{\IZ}{\mathbb{Z}}
\newcommand{\IQ}{\mathbb{Q}}
\newcommand{\IC}{\mathbb{C}}
\newcommand{\IA}{\mathbb{A}}
\newcommand{\IP}{\mathbb{P}}
\newcommand{\uIC}{\ul{\mathbb{C}}}
\newcommand{\cC}{\mathcal C}
\newcommand{\cG}{\mathcal G}
\newcommand{\cL}{\mathcal L}
\newcommand{\cM}{\mathcal M}
\newcommand{\cO}{\mathcal O}
\newcommand{\cR}{\mathcal R}
\newcommand{\cS}{\mathcal S}
\newcommand{\cT}{\mathcal T}
\newcommand{\cX}{\mathcal X}
\newcommand{\cY}{\mathcal Y}
\newcommand{\mfm}{\mathfrak{m}}
\newcommand{\mfa}{\mathfrak{a}}
\newcommand{\al}{\alpha}
\newcommand{\be}{\beta}
\newcommand{\cd}{\cdot}
\newcommand{\De}{\Delta}
\newcommand{\de}{\delta}
\newcommand{\bD}{\mathbf{\Delta}}
\newcommand{\e}{\epsilon}
\newcommand{\ga}{\gamma}
\newcommand{\lam}{\lambda}
\newcommand{\lan}{\langle}
\newcommand{\mcal}{\mathcal}
\newcommand{\mb}{\mbox}
\newcommand{\nf}{\normalfont}
\newcommand{\op}{\oplus}
\newcommand{\ot}{\otimes}
\newcommand{\ran}{\rangle}
\newcommand{\ra}{\rightarrow}
\newcommand{\si}{\sigma}
\newcommand{\Si}{\Sigma}
\newcommand{\bS}{\mathbf{\Sigma}}
\newcommand{\ti}{\tilde}
\newcommand{\ul}{\underline}
\theoremstyle{plain} %text of this environment is typesetted in italics
\newtheorem{theorem}{\indent\sc Theorem}[section]
\newtheorem{lemma}[theorem]{\indent\sc Lemma}
\newtheorem{corollary}[theorem]{\indent\sc Corollary}
\newtheorem{proposition}[theorem]{\indent\sc Proposition}
\theoremstyle{definition} %text of this environment is typesetted in roman letters
\newtheorem{definition}[theorem]{\indent\sc Definition}
\newtheorem{remark}[theorem]{\indent\sc Remark}
\newtheorem{example}[theorem]{\indent\sc Example}
\newtheorem{notation}[theorem]{\indent\sc Notation}
\begin{document}

\title[On toric Deligne-Mumford stacks]{A note on toric Deligne-Mumford stacks} 

\author[F. Perroni]{Fabio Perroni}

%%%%%%%%%%%%%%% footnote %%%%%%%%%%%%%%%%
\subjclass[2000]{ %2000 MSC numbers
Primary 14M25; Secondary 14A20.
}

\thanks{This research was partially supported by SNF, No 200020-107464/1.
}
%%%%%%%%%%%% Authors' addresses %%%%%%%%%%%%%
\address{
Institut f\"ur Mathematik \endgraf
Universit\"at Z\"urich \endgraf
Winterthurerstrasse 190 \endgraf
8057 Z\"urich \endgraf
Switzerland
}
\email{fabio.perroni@math.unizh.ch}

%%%%%%%%%%%%%%%%%%%%%%%%%%%%%%%%%%%%%%%%%

\maketitle

\begin{abstract}
We give a new description of the data needed to specify a morphism 
from a scheme to a toric Deligne-Mumford stack. The description
is given in terms of a collection of line bundles and sections
which satisfy certain conditions.
As applications, we
characterize any toric Deligne-Mumford stack as a product of roots
of line bundles over the rigidified stack, 
describe the torus action, 
describe morphisms between toric Deligne-Mumford stacks with
complete coarse moduli spaces in terms of homogeneous polynomials,
and compare two different definitions of toric stacks.
\end{abstract}

\section{Introduction} 
A map from a scheme $Y$ to the projective space $\IP^d$ is determined by a line bundle $L$ on $Y$
together with $d+1$ sections which do not vanish simultaneously.
More generally,  when $X$ is a smooth toric variety
a map $Y\ra X$ is determined by a collection of line bundles and sections
on $Y$ which satisfy certain compatibility and nondegeneracy conditions \cite{cox}.
An analogous result holds in the case of a  simplicial 
toric variety $X$: there is a natural orbifold structure $\cX$ on $X$
such that a map $Y\ra \cX$ is determined by a collection of line bundles and sections
on $Y$ precisely as in the case of $X$ smooth \cite{DH}. More recently,
toric Deligne-Mumford stacks have been defined \cite{BCS}.
They are smooth separated Deligne-Mumford stacks whose coarse
moduli spaces are simplicial toric varieties and such that the
automorphism group of the generic point is not necessarily trivial. 

The goal of the present paper is to generalize the results in \cite{cox}
and \cite{DH} in order to describe morphisms $Y\ra \cX$
where $\cX$ is a toric Deligne-Mumford stack in the sense of \cite{BCS} (Theorem \ref{mainth}).
As an application of this result we deduce some geometric
properties of toric Deligne-Mumford stacks. We  show that, given $\cX$,
the rigidification with respect to the generic automorphism group
$\cX \ra \cX_{\rm rig}$ is isomorphic to the fibered product of 
roots of certain line bundles over $\cX_{\rm rig}$ (Proposition \ref{ger}). 
This result is  used to obtain a classification
of toric Deligne-Mumford stacks in terms of the combinatorial data $\bD$ 
(Theorem \ref{classificazione}).
In Section 4 we describe the torus action. In Section 5
we show how homogeneous polynomials can be used
to describe all maps $\cY \ra \cX$
between the toric Deligne-Mumford stacks $\cY$ and $\cX$ whose
coarse moduli spaces are complete varieties (Theorem \ref{mappe polinomiali}).
Finally, we compare two different definitions of toric stacks:
the one of \cite{BCS} and that of \cite{Iwanari}.

The construction presented in \cite{BCS} is inspired
by the quotient construction for toric varieties.
On the other hand, a toric variety $X$ is a separated, normal variety
containing a torus as dense open subvariety such that the multiplication
of the torus extends to an action on $X$. This point of view inspired
the work \cite{FMN}, where the definition of 
``smooth toric Deligne-Mumford stack'' is given.
Toric Deligne-Mumford stacks are smooth stacks and they are 
``smooth'' in the sense of \cite{FMN}.
Throughout the paper and in the appendix we will make some remarks 
on the relations between the two constructions.

\medskip

\noindent \textit{Notation}.  We denote by $\mathbf{\De}$ the following set of data:
\begin{enumi}
\item a free abelian group $N$ of rank $d$,
\item a simplicial fan $\De$ in $N_{\IQ}$ in the sense of \cite{F},
where $N_{\IQ}:= N\ot_{\IZ} \IQ$,
\item an element $a_{\rho} \in \rho \cap N$ for any $\rho \in \De(1)$,
where $\De(1)$ is the set of $1$-dimensional cones of $\De$,
\item a sequence of $R$ positive non zero integers $r_1,...,r_R$,
\item and an integer $b_{i \rho} \in \IZ$, for any $i\in \{1,...,R\}$ and
             $\rho \in  \De(1) $.
\end{enumi}
We denote with $M$ the dual lattice of $N$, $M:= {\rm Hom}_{\IZ}(N,\IZ)$, and
with $\De_{\rm max}$  the set of maximal cones in $\De$
with respect to the inclusion.
We will use $\ot_{\rho}$ to denote $\ot_{\rho \in \De(1)}$,
$\ot_{i}$ to denote $\ot_{i =1}^R$, and similarly for $\sum_{\rho}$
and $\sum_{i}$.

We  work over the field of complex numbers $\IC$.
The site given by the category of $\IC$-schemes with
the \'etale topology is denoted by (Sch).
We write DM-stack instead of Deligne-Mumford stack.
For any scheme $Y$, we denote by $\ul{\IC}$ the trivial line bundle
over it.

\medskip

\noindent \textit{Acknowledgments}. This work began after a series of discussions with  Christian Okonek,
I am  grateful to him.
I am grateful to Markus D\"urr and Mihai Halic for bring at my attention their
paper and for further discussions.
I wish to thank Markus Bader,   Barbara Fantechi,
Andrew Kresch, \'Etienne Mann, Fabio Nironi, Markus Perling, 
and Jonathan Wise for useful discussions;
and moreover Isamu Iwanari and Martin Olsson for suggesting interesting 
connections with logarithmic geometry.
Part of the work was carried out
during a visit at the Institut Henri Poincar\'e, in occasion of the Program
"Groupoids and Stacks in Physics and Geometry", I thank  the organizers
for inviting me  and for the kind hospitality.

\section{$\mathbf{\De}$-collections.}
$\De$-collections were introduced in \cite{cox}
 as the data needed to specify a morphism
from a scheme to a smooth toric variety (see Example \ref{mce} below).
The generalization to the case of a simplicial fan $\De$ has been
studied in \cite{DH}:
when $R=0$ the content of Theorem \ref{mainth} below coincides with that of 
Theorem 16 in \cite{DH}. In this Section we investigate the case
of morphisms 
to a toric DM-stack (in the sense of \cite{BCS}). We define $\bD$-collections
and morphisms between them
as collections of line bundles and sections over a given scheme
which satisfy certain compatibility and nondegeneracy conditions.
The main result is Theorem \ref{mainth} where we prove that the category of 
$\bD$-collections $\cC_{\bD}$ is a separated smooth DM-stack and,
if the $1$-dimensional cones of $\De$ span $N_{\IQ}$, is isomorphic to the
toric DM-stack $\cX_{\bS}$, where $\bS$ is a stacky fan
determined by $\bD$. 

\begin{definition}\label{deltacoll}
Let $Y$ be a scheme. A \textit{$\bD$-collection} on $Y$ is the data of
\begin{enumi}
\item a line bundle $L_{\rho}$ on $Y$ and a section  $u_{\rho}\in H^0(Y, L_{\rho})$
for any $\rho \in \De (1)$,
\item  isomorphisms $c_m : \ot_{\rho} L_{\rho}^{\ot \lan m, a_{\rho}\ran} \ra \uIC$ for any $\quad m\in M$,
\item  a line bundle $M_i$ on $Y$ and an isomorphism
$d_i : \ot_{\rho}L_{\rho}^{\ot b_{i \rho}}\ot M_i^{\ot r_i} \ra \uIC$ for any $i\in \{ 1,...,R \}$,
\end{enumi}
such that the following conditions are satisfied:
\begin{enumerate}
\item $c_m \ot c_{m'} = c_{m+m'}$ for any $m,m' \in M$,
\item for any $y\in Y$, there exists a cone $\si \in \De_{\rm max}$
such that $u_{\rho}(y)\not= 0$ for all $\rho \not\in \si$.
\end{enumerate}

A $\bD$-collection on $Y$ is written $(L_{\rho}, u_{\rho}, c_m, M_i, d_i)/Y$.
\end{definition}

\begin{definition}\label{deltacoll2}
Let  $(L_{\rho}', u_{\rho}', c_m', M_i', d_i')/Y'$
and $(L_{\rho}, u_{\rho}, c_m, M_i, d_i)/Y$ be two $\bD$-collections.
A \textit{morphism} from $(L_{\rho}', u_{\rho}', c_m', M_i', d_i')/Y'$
to $(L_{\rho}, u_{\rho}, c_m, M_i, d_i)/Y$ is given by a morphism
$f:Y' \ra Y$ of schemes, morphisms $\ga_{\rho}:L_{\rho}' \ra  L_{\rho} $
and $\de_i:M_i' \ra  M_i$ of line bundles for any $\rho \in \De (1)$
and any $i\in \{ 1,...,R \}$, such that  the following conditions are
satisfied:
\begin{itemize}
\item the $\ga_{\rho}$'s induce  isomorphisms $L_{\rho}' \ra
  f^*L_{\rho}$ and  the same holds for the $\de_i$'s;
\item $\ga_{\rho}\circ u_{\rho}'=  u_{\rho}\circ f$ for all $\rho\in
  \De (1)$;
\item for any $m\in M$ the following diagram commutes
\begin{equation*}
\begin{CD}
\ot_{\rho}  L_{\rho}'^{\ot \lan m, a_{\rho}\ran} @>c_m' >> \uIC \\
@V \ot_{\rho}\ga_{\rho}^{\ot \lan m, a_{\rho}\ran} VV @VV f \times
{\rm id}_{\IC} V \\
\ot_{\rho}  L_{\rho}^{\ot \lan m, a_{\rho}\ran} @>c_m>>  \uIC
\end{CD}
\end{equation*}
\item for any $i\in \{1,...,R\}$ the following diagram  commutes
\begin{equation*}
\begin{CD}
\ot_{\rho}L_{\rho}'^{\ot b_{i \rho}}\ot M_i'^{\ot r_i} @>d_i' >> \uIC \\
@V \ot_{\rho}\ga_{\rho}^{\ot b_{i \rho}}\ot {\de}_i^{\ot r_i} VV @VV f \times
{\rm id}_{\IC} V \\
\ot_{\rho} L_{\rho}^{\ot b_{i \rho}}\ot M_i^{\ot r_i} @> d_i>> \uIC.
\end{CD}
\end{equation*}
\end{itemize}

A morphism from $(L_{\rho}', u_{\rho}', c_m', M_i', d_i')/Y'$ to
$(L_{\rho}, u_{\rho}, c_m, M_i, d_i)/Y$ is denoted by $(f,\ga_{\rho}, \de_{i})$.
\end{definition}

\vspace{0.5cm}

Let us consider some   examples.
\begin{example}\label{mce} \nf Let $\De$ be a fan and $N$ be a lattice 
which determine a smooth toric 
variety $X$. Set the $a_{\rho}$s the minimal lattice points
of the rays,  $R=0$. In this case $\bD$ is determined by $\De$ and
$N$, so we talk about $\De$-collections.
On $X$ there is a canonical $\De$-collection
defined as follows. For any $\rho \in \De(1)$, 
let  $D_{\rho}$ be the corresponding effective
Weil divisor stable under the torus action. Since $X$ is smooth, 
it corresponds to an effective Cartier divisor,
hence it  gives a line bundle $L_{\rho}$
with a section $u_{\rho}\in H^0(X,L_{\rho})$.
For any $m\in M$, the character $\chi^m$ is a rational function on $X$
such that ${\rm div}(\chi^m)= \sum_{\rho}\lan m,a_{\rho}\ran D_{\rho}$,
hence we get an isomorphism
$$
c_{\chi^m} : \ot_{\rho} L_{\rho}^{\ot \lan m,a_{\rho}\ran} \ra \uIC.
$$
Then, $(L_{\rho},u_{\rho},c_{\chi^m})$ is a $\De$-collection
on $X$ (Lemma 1.1 in \cite{cox}). This $\De$-collection is called \textit{universal} 
because of the following  result.
Let 
$$
C_{\De}: ({\rm Sch}) \ra ({\rm Sets})
$$
be the contravariant functor that associates to any scheme $Y$ the set
of equivalence classes of $\De$-collections on $Y$. Then $X$ is the fine 
moduli space for $C_{\De}$, and $(L_{\rho},u_{\rho},c_{\chi^m})$
is the universal family \cite{cox}.
\end{example}

\begin{example}\label{ex1.4}\nf Let $N:= \IZ$, $\De := \{ \{ 0\}, \rho:= \IQ_{\geq 0}\}$, $a_{\rho}:= a$, and
$R:=0$. A $\bD$-collection is given by a line bundle $L$ on $Y$, a section
$u\in H^0(Y,L)$, and an isomorphism $c: L^{\ot a} \ra \uIC$.
Then the category of $\bD$-collections is equivalent to the stack 
$[\IA^1_{\IC}/\mu_a]$, where $\mu_a$ is the group of $a$-th roots of the unity
acting by multiplication on $\IA^1_{\IC}$.
\end{example}
The next example is the analog of Example 3.5 in \cite{BCS}.
\begin{example}
Let $N:=\IZ$, $\De:= \{ \{0\}, \rho:= \IQ_{\leq 0}, \tau :=\IQ_{\geq 0} \}$,
$a_{\rho}:= -3$, $a_{\tau}:= 2$, $r=2$, $b_{\rho}:=0$ and $b_{\tau}:=1$. \\
A $\bD$-collection over $Y$ is given by 
\begin{itemize}
\item line bundles $L_{\rho}$ and $L_{\tau}$ over $Y$ with  sections $u_{\rho}\in H^0(Y,L_{\rho})$ and $u_{\tau}\in H^0(Y,L_{\tau})$
which do not vanish simultaneously,
\item an isomorphism $c: (L_{\rho}^{\vee})^{\ot 3}\ot L_{\tau}^{\ot 2} \ra \ul{\IC}$,
\item a line bundle $M$ over $Y$ and an isomorphism $d: L_{\tau} \ot M^{\ot 2} \ra \ul{\IC}$.
\end{itemize}

Let $\IP(3,2)$ be the weighted projective line $[\IC^2-\{ 0 \} / \IC^*]$, where 
$\IC^*$ acts with weights $2,3$. Let $\cO (1)$ be the line bundle on  $\IP(3,2)$
associated to the character $\rm{id}_{\IC^*}$ as in \cite{BH}. 
We define a functor from the category of $\bD$-collections
to $\sqrt{\cO (1)/\IP(3,2)}$ (in Section 3 we will review 
the definition of roots of line bundles).\\
For any $\bD$-collection $(L_{\rho},L_{\tau},u_{\rho},u_{\tau}, M, d)/Y$,
we define a morphism $Y \ra \sqrt{\cO (1)/\IP(3,2)}$ as follows:
the morphism $Y\ra \IP(3,2)$ is determined by the line bundle $L:= L_{\rho}^{\vee}\ot L_{\tau}$
and the sections $u_{\rho} \in H^0(Y,L^{\ot 2})$ and  $u_{\tau} \in H^0(Y,L^{\ot 3})$,
where we have used $c$ to identify $L^{\ot 2}$ with $L_{\rho}$
and $L^{\ot 3}$ with $L_{\tau}$;
 the square root of $\cO (1)$ is $(M\ot (L_{\rho}^{\vee}\ot L_{\tau}),d)$. 
The correspondence between arrows is defined in an analogous way.
Then the resulting functor is an equivalence of categories. 
\end{example}

\vspace{0.5cm}

The main result of the present paper (Theorem \ref{mainth}) states that, in general, 
there is a correspondence
between combinatorial data $\bD$ and toric DM-stacks.
Let $\mcal{C}_{\bD}$ be the category whose objects are
$\bD$-collections and morphisms are morphisms between $\bD$-collections.
The functor
$$
p: \mcal{C}_{\bD} \ra \rm{(Sch)}
$$
which sends the $\bD$-collection $(L_{\rho}, u_{\rho}, c_m, M_i, d_i)/Y$
to $Y$ and the morphism $(f,\ga_{\rho}, \de_{i})$ to $f$ makes
$\mcal{C}_{\bD}$ a category fibered in groupoids (a CFG) over the site (Sch).

\begin{theorem}\label{mainth}
The category fibered in groupoids $p:\mcal{C}_{\bD} \ra \rm{\nf{(Sch)}}$
is a separated smooth Deligne-Mumford stack whose
coarse moduli space is the toric variety  associated to
the fan $\De$ and the lattice $N$.

Moreover, if the $1$-dimensional cones of $\De$ span $N_{\IQ}$,
set\\
$$
\bS := ( N \op_{i=1}^R \IZ/ r_i\, ,\, \De \, ,\, \{ (a_{\rho},
\overline{b_{1 \rho}},...,\overline{b_{R \rho}})   \}_{\rho} \,),
$$
\\
where $\overline{b_{i \rho}}$ denotes the class of $b_{i \rho}$ in $ \IZ/ r_i$.
Then $\mcal{C}_{\bD}$ is isomorphic to the toric Deligne-Mumford 
stack $\mcal{X}_{\bS}$ associated to the stacky fan $\bS$ 
as defined in \cite{BCS}.
\end{theorem}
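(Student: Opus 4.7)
I would proceed in three stages: (i) verify the stack property; (ii) reduce to the case $R=0$ (which is \cite{DH}) via an iterated root-stack construction; (iii) identify the result with $\mathcal{X}_{\bS}$ using Gale duality.

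\emph{Stage (i): stack property.} The data of a $\bD$-collection consists of line bundles, sections, and isomorphisms of tensor products of line bundles, each of which satisfies étale descent. Conditions~(1)–(2) of Definition~\ref{deltacoll} are local on the base and stable under pullback. Hence $p\colon\mathcal{C}_{\bD}\to(\mathrm{Sch})$ is a stack.

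\emph{Stage (ii): reduction to $R=0$.} For each $i\in\{1,\dots,R\}$, giving $(M_i,d_i)$ with $d_i\colon\ot_\rho L_\rho^{\ot b_{i\rho}}\ot M_i^{\ot r_i}\ra\uIC$ is equivalent to giving an $r_i$-th root of the line bundle $\mathcal{L}_i:=\ot_\rho L_\rho^{\ot(-b_{i\rho})}$. Therefore, if $\mathcal{D}$ denotes the CFG defined as in Definition~\ref{deltacoll} but with $R=0$, there is a natural isomorphism
$$
\mathcal{C}_{\bD}\;\cong\;\sqrt[r_1]{\mathcal{L}_1/\mathcal{D}}\times_{\mathcal{D}}\cdots\times_{\mathcal{D}}\sqrt[r_R]{\mathcal{L}_R/\mathcal{D}},
$$
where each $\mathcal{L}_i$ is the universal line bundle over $\mathcal{D}$ built from the tautological $L_\rho$. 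Root stacks along line bundles preserve smoothness, separatedness, and the Deligne--Mumford property, and induce isomorphisms on coarse moduli spaces. Thus the first assertion follows from the case $R=0$, which is precisely Theorem~16 of \cite{DH}.

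\emph{Stage (iii): identification with $\mathcal{X}_{\bS}$.} Assume the $1$-cones span $N_\IQ$. The BCS construction gives $\mathcal{X}_{\bS}=[Z_\De/G_{\bS}]$, where $Z_\De\subset\mathbb{A}^{\De(1)}$ is the quasi-affine complement of the BCS irrelevant locus and $G_{\bS}=\mathrm{Hom}_{\IZ}(\mathrm{DG}(\beta_{\bS}),\IC^*)$ acts via the Gale dual of $\beta_{\bS}\colon\IZ^{\De(1)}\to N\op\bigoplus_i\IZ/r_i$, $e_\rho\mapsto(a_\rho,\overline{b_{1\rho}},\dots,\overline{b_{R\rho}})$. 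I would build a tautological $G_{\bS}$-equivariant $\bD$-collection on $Z_\De$: take $L_\rho$ and $M_i$ to be trivial bundles with equivariant structures given by the characters of $G_{\bS}$ dual to the $e_\rho$ and to the generators of $\IZ/r_i$; set $u_\rho:=z_\rho$ and let the $c_m$, $d_i$ be the tautological trivialisations. These are $G_{\bS}$-equivariant precisely because of the relations $\sum_\rho\lan m,a_\rho\ran e_\rho^\vee=0$ (for $m\in M$) and the $r_i$-torsion relations built into the character lattice. Descent produces a morphism $\Phi\colon\mathcal{X}_{\bS}\to\mathcal{C}_{\bD}$. To construct an inverse, given a $\bD$-collection on $Y$, condition~(2) provides, étale-locally on $Y$, a cone $\si\in\De_{\max}$ such that $u_\rho$ trivialises $L_\rho$ for $\rho\notin\si$; the $c_m$ (for $m$ in the dual basis of $\si$) then trivialise the remaining $L_\rho$ with $\rho\in\si(1)$, and the $d_i$ trivialise the $M_i$. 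The resulting coordinates $(u_\rho)$ define an étale-local morphism to $Z_\De$.

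\emph{Main obstacle.} The crux of Stage~(iii) is showing that the group of ambiguities between two such local trivialisations is exactly $G_{\bS}$, so that the $(u_\rho)$ assemble into a $G_{\bS}$-torsor equipped with an equivariant map to $Z_\De$, i.e.\ a morphism $Y\to[Z_\De/G_{\bS}]$ inverse to $\Phi$. This is a Gale-duality bookkeeping step: one must identify the character lattice of $G_{\bS}$ with the quotient of $\IZ^{\De(1)}\op\bigoplus_i\IZ$ by the relations imposed by the $c_m$ and the $d_i$, which is exactly where the precise form of $\bS$ enters. Once this identification is in place, the remainder of the equivalence is formal.
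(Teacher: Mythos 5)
Your proposal is essentially correct but organised quite differently from the paper. For the first assertion, the paper does \emph{not} reduce to $R=0$ via root gerbes: it proves the spanning case by exhibiting the equivalence with $\cX_{\bS}$ and then importing separatedness, smoothness and the coarse moduli space from \cite{BCS}, and it handles non-spanning rays by splitting off a torus factor, $\cC_{\bD}\cong\cC_{\bD'}\times{\rm Spec}(\IC[N'^{\perp}])$. Your route --- observing that $(M_i,d_i)$ is exactly an $r_i$-th root of $\ot_\rho L_\rho^{\ot(-b_{i\rho})}$, so that $\cC_{\bD}$ is a fibered product of root gerbes over the $R=0$ stack of \cite{DH} --- is the content of the paper's Proposition \ref{ger}, which the paper only establishes \emph{after} Theorem \ref{mainth}; using it up front is legitimate (the root-gerbe argument is independent of the theorem) and arguably cleaner, since it gives the first assertion uniformly without the spanning dichotomy. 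What the paper's route buys instead is that the identification with $\cX_{\bS}$ and the basic geometric properties come out of a single construction.

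For the second assertion, your functor $\Phi$ (tautological equivariant $\bD$-collection on $Z_\De$ plus descent) is the same functor as the paper's $F$, just built by descent rather than by associated bundles. The one place where your sketch falls short of a proof is exactly the step you flag as the ``main obstacle'': showing that the local trivialisations glue to a $G_{\bS}$-torsor with equivariant map to $Z_\De$. The paper closes this not by hand but by Lemmas \ref{lemma1} and \ref{lemma2}: the short exact sequence $1\to G\to(\IC^*)^{n+R}\to(\IC^*)^{d+R}\to 1$ identifies $BG$ with $B(\IC^*)^{n+R}\times_{B(\IC^*)^{d+R}}{\rm Spec}\,\IC$, so a $\bD$-collection (a $(\IC^*)^{n+R}$-torsor together with the trivialisation of its pushforward encoded by the $c_m$ and $d_i$) \emph{is} a $G$-torsor, and Lemma \ref{lemma2} converts the sections into the equivariant map. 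You should also correct one imprecision in your inverse construction: for a simplicial but non-smooth cone $\si$ there is no ``dual basis of $\si$'' in $M$, so the $c_m$ do not trivialise the individual $L_\rho$ with $\rho\in\si(1)$; they only trivialise certain tensor combinations, and one must pass to a further \'etale cover extracting roots --- this is precisely where the nontrivial stabilisers of $\cX_{\bS}$ come from. With the fiber-product lemma (or an equivalent Gale-duality computation) supplied, your argument is complete.
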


\vspace{0.2cm}

We notice that one can define $\mathbf{\De}$-collections over a stack
formally in the same way as for schemes.
Moreover, on $\mcal{C}_{\mathbf{\De}}$ there is a tautological  $\mathbf{\De}$-collection:
$$
(\mathfrak{L}_{\rho}, \mathfrak{u}_{\rho}, \mathfrak{c}_{m}, \mathfrak{M}_i, \mathfrak{d}_i)/\mcal{C}_{\mathbf{\De}}.
$$
As a result the theory of descent gives the following
\begin{corollary}\label{mfs}
Let $\mcal{Y}$ be a DM-stack. Then the category of morphisms $\mcal{Y} \ra \mcal{C}_{\mathbf{\De}}$
is equivalent to the category of $\bD$-collections over   $\mcal{Y}$.
\end{corollary}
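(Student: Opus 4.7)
The plan is to use Theorem \ref{mainth} together with the standard machinery of descent, turning the equivalence between $\mcal{C}_{\bD}$-valued points of schemes and $\bD$-collections into an equivalence over arbitrary DM-stacks. The functor in one direction is clear: given a morphism $F:\mcal{Y}\ra\mcal{C}_{\bD}$, we pull back the tautological $\bD$-collection $(\mathfrak{L}_\rho,\mathfrak{u}_\rho,\mathfrak{c}_m,\mathfrak{M}_i,\mathfrak{d}_i)/\mcal{C}_{\bD}$ to obtain a $\bD$-collection $(F^*\mathfrak{L}_\rho,F^*\mathfrak{u}_\rho,F^*\mathfrak{c}_m,F^*\mathfrak{M}_i,F^*\mathfrak{d}_i)$ on $\mcal{Y}$. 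On morphisms, a $2$-morphism between two such $F,F'$ pulls back to an isomorphism of $\bD$-collections. This defines a functor $\Phi$ from $\mathrm{Hom}(\mcal{Y},\mcal{C}_{\bD})$ to $\bD$-collections on $\mcal{Y}$.

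To construct the quasi-inverse $\Psi$, I first observe that the formation of a $\bD$-collection is entirely built out of line bundles, isomorphisms, and sections, each of which satisfies \'etale descent; hence the prestack of $\bD$-collections on schemes over a given base is in fact a stack for the \'etale topology. Concretely, given a $\bD$-collection $\xi=(L_\rho,u_\rho,c_m,M_i,d_i)$ on $\mcal{Y}$, I choose an \'etale presentation $U\ra\mcal{Y}$ by a scheme, with $R:=U\times_{\mcal{Y}}U$ and the two projections $p_1,p_2:R\ra U$. The restriction of $\xi$ to $U$ is a $\bD$-collection on the scheme $U$, so by Theorem \ref{mainth} it corresponds to a morphism $f:U\ra\mcal{C}_{\bD}$; the canonical isomorphism $p_1^*\xi|_U\simeq p_2^*\xi|_U$ coming from descent of $\xi$ yields a $2$-isomorphism $f\circ p_1\simeq f\circ p_2$ on $R$. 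One then checks that this $2$-isomorphism satisfies the cocycle condition on the triple fibered product $U\times_{\mcal{Y}}U\times_{\mcal{Y}}U$ (since the descent data for $\xi$ itself does), and since $\mcal{C}_{\bD}$ is a stack by Theorem \ref{mainth}, these data descend to a morphism $\Psi(\xi):\mcal{Y}\ra\mcal{C}_{\bD}$, unique up to canonical $2$-isomorphism.

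Finally, I would verify $\Phi\circ\Psi\simeq\mathrm{id}$ and $\Psi\circ\Phi\simeq\mathrm{id}$. Both checks reduce, after passing to the presentation $U\ra\mcal{Y}$, to the corresponding statements for the scheme $U$, which hold because Theorem \ref{mainth} identifies $\mcal{C}_{\bD}$ as a fine moduli object for $\bD$-collections on schemes: pulling back the tautological family along the morphism associated to a $\bD$-collection on $U$ recovers the original collection up to canonical isomorphism, and conversely. The naturality in the choice of presentation is automatic since any two presentations admit a common refinement.

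The only step that is not completely formal is the descent of the morphism $f:U\ra\mcal{C}_{\bD}$ along $U\ra\mcal{Y}$, and this is precisely where one needs to know that $\mcal{C}_{\bD}$ is a stack, i.e., the stack property built into Theorem \ref{mainth}; I do not expect any serious obstacle beyond bookkeeping of the cocycle conditions.
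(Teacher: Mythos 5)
Your argument is correct and is essentially the paper's own: the paper proves this corollary simply by exhibiting the tautological $\bD$-collection on $\mcal{C}_{\bD}$ and invoking the theory of descent, which is exactly the two-step construction (pull back the tautological family; descend a $\bD$-collection along an \'etale presentation using the stack property of $\mcal{C}_{\bD}$) that you spell out. The only cosmetic issue is your reuse of the letter $R$ for $U\times_{\mcal{Y}}U$, which clashes with the paper's $R$ counting the integers $r_1,\dots,r_R$.
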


\medskip

Before to proceed with the proof of the theorem, let me remark that
the stacks $\cC_{\mathbf{\De}}$ defined in this paper
are exactly the ``smooth toric DM stacks'' introduced in \cite{FMN} 
(see the appendix, Thm. \ref{relfmn}). 

We collect below some general results we need
in order to prove the above Theorem.

\begin{notation}\nf Following the notations used in \cite{B&Al}, 
for any quasi-affine group scheme $G$, we denote by
$$
p_{G} : BG \ra {\rm (Sch)}
$$
the structure morphism of the stack $BG$, and by
$$
\pi_{G}: {\rm Spec}\IC \ra BG
$$
the covering. We denote by
$$
b_G : {\rm pre}BG \ra BG
$$
the stackification morphism from the pre-stack ${\rm pre}BG$ to $BG$.
For any morphism $\varphi :G \ra H$ of group schemes, we denote by
$$
{\rm pre} B\varphi: {\rm pre}BG \ra {\rm pre}BH
$$
the induced morphism of pre-stacks.
\end{notation}

We quote a remark from \cite{B&Al}.
\begin{remark}\nf Let $\varphi: G\ra H$ be a morphism of
quasi-affine group schemes. Let $P\ra Y$ be a principal $G$-bundle.
Consider the right $G$-action on $P\times H$:
\begin{equation}\label{azione}
(x,h)\cd g = (x\cd g, \varphi (g^{-1}) \cd h).
\end{equation}
The theory of descent guarantees that there exists a quotient
scheme for the previous action,  $(P\times H)/G$, together with a morphism
$(P\times H)/G \ra Y$ inducing a principal $H$-bundle structure. 
The scheme $(P\times H)/G$ will be
denoted either as $P\times_{\varphi}H$ or by $P\times_{G}H$. 
Notice that  $P\times_{\varphi}H$ is not unique and due to this ambiguity
the correspondence
$P \mapsto P\times_{\varphi}H$
defines a functor $B\varphi: BG \ra BH$ up to unique canonical $2$-isomorphism.
In the following this ambiguity will be understood.

An analogous notation will be used to denote the quotients $P\times_G X$
in the case where $X$ is a quasi-affine scheme with a right $G$-action.
\end{remark}

We now recall two well known results in order to be  self-contained as much as possible. 
For complex algebraic varieties
the reference is \cite{Se}. For a more general context we refer to \cite{Gir},
Ch. III, Proposition 3.2.1. 

\begin{lemma}\label{lemma1}
Let
$$
1 \ra G \xrightarrow{\varphi}  H  \xrightarrow{\psi}  K \ra 1
$$
be a short exact sequence of quasi-affine group schemes.
Then
\begin{equation}\label{l1}
(B\varphi, p_{G}) : BG \ra BH \left._{B\psi} \times_{\pi_{K}} \right. {\rm Spec}\IC
\end{equation}
is an isomorphism of stacks.
\end{lemma}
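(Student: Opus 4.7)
The statement is a stacky reformulation of the classical fact that, for a subgroup inclusion $\varphi : G \hookrightarrow H$ with quotient $K = H/\varphi(G)$, reductions of the structure group of a principal $H$-bundle $Q$ to $G$ are in natural bijection with sections of the associated $K$-bundle $Q \times_{\psi} K = Q/\varphi(G)$. I would turn this into an equivalence of categories fibred over (Sch).

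First I would spell out the objects. Over a scheme $Y$, the left hand side is the groupoid of principal $G$-bundles $P \to Y$ with $G$-equivariant isomorphisms. The right hand side is the groupoid of pairs $(Q,\sigma)$, where $Q \to Y$ is a principal $H$-bundle and $\sigma$ is a trivialisation of $Q \times_{\psi} K$, with morphisms the $H$-equivariant isomorphisms $Q \to Q'$ compatible with $\sigma,\sigma'$. Under the stated functor, $P$ is sent to $(P \times_{\varphi} H,\, \sigma_P)$, where $\sigma_P$ is the canonical trivialisation of $(P\times_{\varphi} H)\times_{\psi}K = P\times_{\psi\circ \varphi} K$ coming from the fact that $\psi\circ\varphi = 1$.

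Next I would construct an explicit quasi-inverse via reduction of structure group. Given $(Q,\sigma)$, the exactness of the sequence identifies $Q\times_\psi K$ with the geometric quotient $Q/\varphi(G)$, whose existence as a scheme and representation of $Q\times_\psi K$ is the classical descent input recalled before the lemma (\cite{Se}, \cite{Gir}). The trivialisation $\sigma$ then corresponds to a section $s : Y \to Q/\varphi(G)$, and I define
$$
P \;:=\; Q \times_{Q/\varphi(G),\, s} Y,
$$
the preimage of $s$ in $Q$. This is naturally a principal $G$-bundle on $Y$, and the assignment is functorial: any $H$-equivariant morphism $Q\to Q'$ compatible with $\sigma,\sigma'$ sends $s$ to $s'$, hence $P$ to $P'$.

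Finally I would check that both compositions are naturally isomorphic to the identity. Starting with $P$, the bundle $(P\times_\varphi H)/\varphi(G)$ is canonically identified with $P\times_{\psi\circ\varphi}K = Y\times K$, and the preimage of the canonical trivialising section is $P\times_{\varphi}\varphi(G) \cong P$. Conversely, starting with $(Q,\sigma)$, the inclusion $P\hookrightarrow Q$ extends $H$-equivariantly to an isomorphism $P\times_{\varphi} H \xrightarrow{\sim} Q$ which takes the canonical trivialisation of the associated $K$-bundle to $\sigma$, by construction of $P$ as the fibre of $\sigma$. Both identifications are manifestly functorial in $Y$, giving the required $2$-isomorphisms.

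\textbf{Main obstacle.} The content of the lemma is essentially the representability of the quotient $Q/\varphi(G)$ as the associated $K$-bundle $Q\times_\psi K$, which is where the quasi-affineness hypothesis and the descent arguments of \cite{Se}, \cite{Gir} enter; once this is granted, the remainder is a bookkeeping verification of naturality.
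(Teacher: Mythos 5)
The paper does not actually prove this lemma: it is quoted as a well-known result with references to \cite{Se} and \cite{Gir}, Ch.~III, Proposition 3.2.1. Your argument is the standard reduction-of-structure-group proof found there and it is correct; the only genuinely nontrivial input, which you correctly isolate, is the representability of $Q/\varphi(G)$ and its identification with the associated bundle $Q\times_{\psi}K$, and this is exactly what the descent remark preceding the lemma supplies.
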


\begin{lemma}\label{lemma2}
Let $G$ be a quasi-affine group scheme and let $X$ be a quasi-affine scheme
with a right $G$-action. For any principal $G$-bundle $\pi : P\ra Y$,
there is a bijection 
\begin{equation}\label{sem}
\{ \mbox{sections of}\quad P\times_G X \ra Y  \} \leftrightarrow
\{ G\mbox{-equivariant morphisms} \quad t:P\ra X \}
\end{equation}
which is functorial with respect to base change.
\end{lemma}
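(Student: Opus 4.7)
The plan is to give explicit constructions in both directions of the correspondence \eqref{sem}, show they are mutually inverse, and then observe that everything is manifestly compatible with base change.

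For the map from right to left, suppose $t: P \to X$ is a $G$-equivariant morphism. I would form the morphism $\tilde{s}: P \to P \times X$ defined by $p \mapsto (p, t(p))$. A direct check using equivariance of $t$ shows $\tilde{s}$ is $G$-equivariant for the $G$-action on $P \times X$ whose quotient is $P \times_G X$, so by descent $\tilde{s}$ factors through a unique morphism $s: Y \to P \times_G X$. Composition with the bundle projection $P \times_G X \to Y$ gives the identity (since this is already true after pullback along $\pi: P \to Y$, and $\pi$ is an epimorphism in (Sch)), so $s$ is a section.

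For the map from left to right, the key observation is that the principal $G$-bundle $\pi: P \to Y$ becomes canonically trivial after pullback along itself: there is a natural isomorphism
\begin{equation*}
P \times_Y P \xrightarrow{\sim} P \times G, \qquad (p,p') \mapsto (p,g),
\end{equation*}
where $g$ is the unique element with $p' = p\cdot g$. Consequently $\pi^*(P \times_G X) \cong P \times X$ canonically over $P$. Given a section $s: Y \to P \times_G X$, pulling back along $\pi$ yields a section $P \to P \times X$ over $P$, whose second component is the desired $G$-equivariant morphism $t: P \to X$; equivariance is checked by pulling back twice and comparing with the defining isomorphism. The two constructions are inverse essentially by unwinding definitions: the composition one way recovers the section because the constructions agree after pullback along $\pi$ (again using that $\pi$ is a descent morphism), and the other composition recovers $t$ by looking at the second component of $\tilde{s}$.

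Finally, for functoriality in base change, suppose $f: Y' \to Y$ is a morphism and set $P' := f^*P$, a principal $G$-bundle on $Y'$ with tautological morphism $\tilde f: P' \to P$. Then $f^*(P\times_G X) \cong P'\times_G X$ canonically, and both assignments in \eqref{sem} commute with the base change: a section $s$ pulls back to $f^*s$, and the associated equivariant morphism $t' : P' \to X$ is simply $t\circ \tilde f$. The main (and only real) obstacle is making the identification $\pi^*(P \times_G X) \cong P \times X$ precise in the relative, non-topological setting; here one invokes the fact that $\pi$ is faithfully flat and quasi-compact (as a principal bundle for a quasi-affine group scheme) so that descent along $\pi$ is effective, which is exactly the content cited in \cite{Se} and \cite{Gir}.
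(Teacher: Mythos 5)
Your proof is correct, and it supplies an argument the paper itself omits: the paper states Lemma \ref{lemma2} as a ``well known result'' and simply refers the reader to \cite{Se} and to \cite{Gir}, Ch.~III, Proposition 3.2.1, so there is no in-text proof to compare against. Your two constructions are the standard ones, and the one point that genuinely requires care in the scheme-theoretic setting --- namely that the isomorphism $P\times_Y P \cong P\times G$ is exactly the torsor condition (the map $(p,g)\mapsto (p,p\cdot g)$ being an isomorphism), so that ``the unique $g$ with $p'=p\cdot g$'' is a morphism of schemes and the trivialization $\pi^*(P\times_G X)\cong P\times X$ is canonical --- is the one you correctly isolate and justify by descent along the faithfully flat quasi-compact (indeed \'etale, in the paper's setting) cover $\pi$. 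Two small points worth making explicit if you write this up: first, fix the $G$-action on $P\times X$ consistently with the paper's convention \eqref{azione} (for a right $G$-action on $X$ the relevant action is the diagonal one, $(p,x)\cdot g=(p\cdot g, x\cdot g)$, under which your graph map $p\mapsto(p,t(p))$ is indeed equivariant); second, the equivariance of $t$ in the reverse direction is most cleanly verified on $P\times_Y P\cong P\times G$ using freeness of the action on $P$, which is the ``pulling back twice'' you allude to. With those details spelled out the argument is complete and also gives the functoriality statement for free, as you note.
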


\vspace{0.3cm}

\begin{proof} (Of Theorem \ref{mainth}).
We proceed as follows:
we first prove that if $\{  \rho \in \De (1) \}$
spans $N_{\IQ}$ then $\cC_{\bD}$ is isomorphic to
$\cX_{\bS}$ as CFG, where $\cX_{\bS}$ is the stack defined in the statement of 
the Theorem, now the result follows from the fact that  $\cX_{\bS}$ 
is a separated smooth DM-stack \cite{BCS};
we proceed afterwards to prove the statement in general.
 
Identify the lattice $N$ with $\IZ^d$ and enumerate the $1$-dimensional cones of $\De$
as $\rho_1,...,\rho_n$. Then the $a_{\rho}\in N$ correspond to $(a_{1 k},...,a_{d k})\in \IZ^d$,
$k\in \{ 1,...,n \}$. Let us define the  matrices
\[
B:= \left( \begin{array}{ccc}
a_{11} & ... & a_{1n} \\
\vdots & ... & \vdots \\
a_{d1} & ... & a_{dn}\\
b_{11} & ... & b_{1n} \\
\vdots & ... & \vdots \\
b_{R1} & ... & b_{Rn}
  \end{array} \right); \qquad
Q:= \left(
\begin{array}{ccc}
0 & ... & 0 \\
\vdots & ... & \vdots \\
0 & ... & 0\\
r_1 & ... & 0 \\
\vdots & ... & \vdots \\
0 & ... & r_R
  \end{array} \right),
\]
where $B\in {\rm Mat}((d+R)\times n,\IZ)$, $Q\in {\rm Mat}((d+R)\times R,\IZ)$.
We consider the exact sequence
$$
0 \ra \left( \IZ^{d+R} \right)^* \xrightarrow{[BQ]^*}  \left( \IZ^{n+R} \right)^* \ra 
{\rm coker}([BQ]^*) \ra 0,
$$
where $[BQ]\in {\rm Mat}((d+R)\times (n+R),\IZ)$,
and we apply the functor ${\rm Hom}_{\IZ}(\_, \IC^*)$, we get an
exact sequence of affine group schemes:
\begin{equation}\label{sests}
1 \ra G \xrightarrow{\varphi} (\IC^*)^n \times (\IC^*)^R \xrightarrow{\psi} (\IC^*)^d \times (\IC^*)^R \ra 1
\end{equation}
where
\begin{eqnarray}\label{psi}
&& \psi (\lam_1,...,\lam_n, \mu_1,...,\mu_R)\\ && \quad = (\lam_1^{a_{11}} \cd \cd \cd \lam_n^{a_{1n}},...,
\lam_1^{a_{d1}} \cd \cd \cd \lam_n^{a_{dn}}, \mu_1^{r_1}\cd \lam_1^{b_{11}} \cd \cd \cd \lam_n^{b_{1n}},
...,\mu_R^{r_R}\cd \lam_1^{b_{R1}} \cd \cd \cd \lam_n^{b_{Rn}} ).\nonumber
\end{eqnarray}

The matrix $Q$ defines a morphism $\IZ^R \ra \IZ^{n+R}$ which is a projective resolution
of $N\op_{i} \IZ/ r_i$, and $B$ defines a lifting $\IZ^{n}\ra \IZ^{d+R}$
of the morphism $\IZ^n \ra N \op_{i} \IZ/ r_i$, 
$e_i \mapsto (a_{\rho_i}, \overline{b_{1 \rho_i}},...,\overline{b_{R\rho_i}})   $,
where $e_i$ is the $i$-th element of the standard basis of $\IZ^n$.
Following \cite{BCS} we associate to this data a toric DM-stack $\cX_{\bS}:=[Z/G]$.

We now define a functor of CFGs over $({\rm Sch})$
\begin{equation}\label{iso}
F : \mcal{X}_{\bS} \ra \mcal{C}_{\bD}.
\end{equation}
Consider an object of $\cX_{\bS}(Y)$,
 \begin{displaymath}
   \xymatrix{P \ar[d]_{\pi} \ar[r]^{t}& Z \\ Y & }
\end{displaymath}
Set 
$$
L_k := B\varphi (P) \times_{(\IC^*)^n \times (\IC^*)^R} \IC,
$$
the associate line bundle with respect to the action
\begin{eqnarray*}
\IC \times ((\IC^*)^n \times (\IC^*)^R) &\ra & \IC \\
z\cd (\lam_1,...,\lam_n, \mu_1,...,\mu_R) &\mapsto & z\cd \lam_k, 
\end{eqnarray*}
where $\varphi$ is defined in \eqref{sests}, and $k\in \{1,...,n\}$.
In the same way, for any $i\in \{ 1,...,R\}$, set 
$$
M_i := B\varphi (P) \times_{(\IC^*)^n \times (\IC^*)^R} \IC,
$$
be the line bundle associated to the action
\begin{eqnarray*}
\IC \times ((\IC^*)^n \times (\IC^*)^R) &\ra & \IC \\
z\cd (\lam_1,...,\lam_n, \mu_1,...,\mu_R) &\mapsto & z\cd \mu_{i}. \nonumber
\end{eqnarray*}

We now define the sections of $L_1,...,L_n$.
Recall that $G$ acts on $\IC^n$ by means of the $(\IC^*)^n$-component
 of $\varphi$. Then
the composition of $t: P \ra Z$ with the inclusion $Z \ra \IC^n$
gives an equivariant morphism $\ti{t} : P \ra \IC^n$.
By Lemma \ref{lemma2} we get a section $u$ of the vector bundle
\begin{equation}\label{sec}
P \times_G \IC^n =L_1 \op ... \op L_n,
\end{equation}
then $u_k$ is the $k$-th component of $u$ with respect to 
the decomposition \eqref{sec}.
Condition (2) of Definition \ref{deltacoll} follows from the fact that $\tilde{t} (P)\subset Z$
and from the definition of $Z$ as in \cite{BCS}.

We next define the isomorphisms $c_m$ required in the definition
of $\bD$-collection. By applying the functor  $(B\varphi , p_{G})$
defined in Lemma \ref{lemma1},  we get
\begin{equation}\label{nls}
(B\varphi , p_{G})(P) =(B\varphi (P), \al , Y\times ((\IC^*)^d \times (\IC^*)^R))
\end{equation}
where 
$\al : B\psi(B\varphi (P)) \ra Y\times ((\IC^*)^d \times (\IC^*)^R)$
is an isomorphism of principal $((\IC^*)^d \times (\IC^*)^R)$-bundles.
Consider now the diagonal action of $((\IC^*)^d \times (\IC^*)^R)$
on $\IC^{d+R}$ with weights $1$. Then from \eqref{psi} it follows that 
the associated vector bundle
$$
B\psi(B\varphi (P))\times_{(\IC^*)^d \times (\IC^*)^R} \IC^{d+R}
$$
is canonically isomorphic to
\begin{equation}\label{cd}
\op_{l=1}^d \left( \ot_{k=1}^n L_k^{\ot a_{lk}} \right) 
\op_{i}^R \left( M^{\ot r_i} \ot_{k=1}^n L^{\ot b_{ik}}  \right).
\end{equation}
The isomorphism $\al$ in \eqref{nls}
gives an isomorphism between \eqref{cd} and the trivial vector bundle,
then we get  isomorphisms  $c_{e_{1}^*},...,c_{e_{d}^*}, d_1,...,d_R$,
where $e_{1}^*,...,e_d^*$ is the dual basis of the standard basis
$e_1,...,e_d$ of $\IZ^d$. The $c_m$'s
are now uniquely determined by condition (1) of Definition \ref{deltacoll}.

We have defined $F$ on objects. The definition on morphisms
and the verification of the fact that it is a functor
is straightforward.
Moreover, $F$ is an equivalence of categories. This follows
from Lemmas \ref{lemma1} and \ref{lemma2}, and from
the equivalence between the category of principal $\IC^*$-bundles
and the one of line bundles. 
The result now follows since $\mcal{X}_{\bS}$ is a 
separated smooth DM-stack whose
coarse moduli space is the toric variety  associated to
the fan $\De$ and the lattice $N$
(Proposition 3.2 and Proposition 3.7 of \cite{BCS}).

We next consider the case where $\{ \rho \in \De(1) \}$ does not span
$N_{\IQ}$. 
We follow the ideas used in the proof of  Theorem 1.1 in \cite{cox}. Set
$$
N':= {\rm Span}( \{ \rho \in \De(1) \}) \cap N.
$$
The fan $\De$ can be regarded as a fan in $N'_{\IQ}$, then set
$$
\mathbf{\De}' =\{ N', \De, \{ a_{\rho} \}, r_1,...,r_R, \{ b_{i\rho} \}\}.
$$
From the first part of the proof we have that $\mcal{C}_{\mathbf{\De}'}$ is a 
separated smooth DM-stack.

 $N/N'$ is torsion free, so we can find a subgroup $N''$ of $N$ such that 
$N=N' \op N''$. The projection $N \ra N'$
determines an inclusion $\iota : M' \ra M$ such that
$M=\iota(M') \op N'^{\perp}$.

Let now $(L_{\rho},u_{\rho},c_m,M_i,d_i)/Y$ be a $\mathbf{\De}$-collection.
For any $m\in N'^{\perp}$, we have $\lan m, a_{\rho}\ran =0$
for all $\rho$. Thus $c_m$ can be identified with an element in $H^0(Y, \mcal{O}_Y^*)$.
Under this identification, the application $N'^{\perp} \ra H^0(Y, \mcal{O}_Y^*)$, $m \mapsto c_m$,
is a group homomorphism, thus induces a morphism of schemes
$Y \ra {\rm Spec}(\IC [N'^{\perp}])$.
In this way we get a morphism
\begin{equation}\label{pn'}
\mcal{C}_{\mathbf{\De}} \ra {\rm Spec}(\IC [N'^{\perp}]).
\end{equation}
On the other hand there is a morphism
\begin{equation}\label{pd'}
\mcal{C}_{\mathbf{\De}} \ra \mcal{C}_{\mathbf{\De}'}
\end{equation}
which associates $(L_{\rho},u_{\rho},c_m,M_i,d_i)/Y$
to $(L_{\rho},u_{\rho},\{c_m \, | \, m \in \iota(M') \}, M_i, d_i)/Y$.
Then the morphism  $\mcal{C}_{\mathbf{\De}} \ra \mcal{C}_{\mathbf{\De}'} \times {\rm Spec}(\IC [N'^{\perp}])$
 whose components are \eqref{pn'} and \eqref{pd'}
is an isomorphism. The result now follows since 
$\mcal{C}_{\mathbf{\De}'} \times {\rm Spec}(\IC [N'^{\perp}])$
is a smooth separated DM-stack and its coarse moduli space
is the toric variety associated to $\De$ and $N$. \end{proof}

\vspace{0.5cm}

\section{Classification of toric DM-stacks}
In this Section we show that the stack $\cC_{\bD}$ can be viewed 
as a gerbe banded by a finite abelian group. Then we study the problem of 
whether two combinatorial data $\bD$ and $\bD'$ define isomorphic
banded gerbes. We give an answer in combinatorial terms.

Let $\mathbf{\De}$ be a combinatorial data as in the Introduction,  set
\\
$$
\mathbf{\De}_{\rm rig} := \{ N\, ,\, \De \, ,\,  a_{\rho} | \rho \in \De(1) \, \}.
$$
\\
Consider the line bundles $\mathfrak{V}_1,...,\mathfrak{V}_R$  
on $\mcal{C}_{\mathbf{\De}_{\rm rig}}$ defined as follows: for any $\mathbf{\De}_{\rm rig}$-collection
$(L_{\rho},u_{\rho},c_m)/Y$, set
$$
{\mathfrak{V}_i }(Y) := \ot_{\rho}L_{\rho}^{\ot b_{i \rho}}, \qquad i\in \{ 1,...,R \};
$$
for any morphism 
$(f,\ga_{\rho}):(L_{\rho}',u_{\rho}',c_m')/Y' \ra (L_{\rho},u_{\rho},c_m)/Y $, set
$$
{\mathfrak{V}_i }(f,\ga_{\rho}):= \ot_{\rho}\ga_{\rho}^{\ot b_{i \rho}}.
$$

Let us denote by $\sqrt[r_i]{\mathfrak{V}_i}$ the gerbe  of $r_i$-th roots of
$\mathfrak{V}_i$, for $i\in \{1,...,R\}$. Just to fix notation we recall its definition here
and refer to \cite{Gir} Ch. IV (2.5.8.1) and \cite{AGVgwdms} for the general definition
and for more details.
For a scheme $Y$ over $\cC_{\bD_{\rm rig}}$, an object of $\sqrt[r_i]{\mathfrak{V}_i}(Y)$
is a pair $(M,d)$, where $M$  is a line bundle on $Y$ and $d:M^{\ot r_i}\ot {\mathfrak{V}_i }(Y) \ra \uIC$
is an isomorphism. If $(M',d')$ is an object over $Y'\ra \cC_{\bD_{\rm rig}}$, then a morphism
$(M',d')\ra (M,d)$ over $(f,\ga_{\rho})$ is given by a morphism of line bundles $\de: M' \ra M$
such that the following  diagram is cartesian
\begin{equation*}
\begin{CD}
M' @>\de>> M \\
@VVV @VVV \\
Y' @>f>> Y
\end{CD}
\end{equation*}
and $d\circ \de^{\ot r_i}\ot_{\rho}\ga_{\rho}^{\ot b_{i \rho}}  = d'$.

\vspace{0.5cm} 

Consider now the morphism of stacks
\begin{equation}\label{R}
\cR: \mcal{C}_{\mathbf{\De}} \ra \mcal{C}_{\mathbf{\De}_{\rm rig}}
\end{equation}
which associates to the $\mathbf{\De}$-collection $(L_{\rho}, u_{\rho}, c_m, M_i, d_i)/Y$
the $\mathbf{\De}_{\rm rig}$-collection $(L_{\rho}, u_{\rho}, c_m)/Y$,
and to $(f, \ga_{\rho}, \de_i)$ the arrow $(f,\ga_{\rho})$.
Then we have the following.\footnote{For a different proof of  
Prop. \ref{ger} see \cite{JT} and \cite{FMN}.} 

\vspace{0.5cm}

\begin{proposition}\label{ger}
The morphism \eqref{R} is a gerbe banded by
$\times_{i=1}^R \mu_{r_i}$ isomorphic to
\begin{equation}\label{gd}
\sqrt[r_1]{\mathfrak{V}_1} \times_{\mcal{C}_{\mathbf{\De}_{\rm rig}}}
\times ... \times_{\mcal{C}_{\mathbf{\De}_{\rm rig}}}
\sqrt[r_R]{\mathfrak{V}_R}.
\end{equation}
\end{proposition}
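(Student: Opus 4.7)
The plan is to construct an equivalence of CFGs over $\cC_{\bD_{\rm rig}}$
$$
\Phi:\cC_{\bD} \;\longrightarrow\; \sqrt[r_1]{\mathfrak{V}_1}\times_{\cC_{\bD_{\rm rig}}}\cdots\times_{\cC_{\bD_{\rm rig}}}\sqrt[r_R]{\mathfrak{V}_R},
$$
whose composition with the projection to $\cC_{\bD_{\rm rig}}$ is the morphism $\cR$ of \eqref{R}; the gerbe and band statements then follow from standard properties of root gerbes.

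On objects, $\Phi$ sends a $\bD$-collection $(L_\rho,u_\rho,c_m,M_i,d_i)/Y$ to the underlying $\bD_{\rm rig}$-collection $(L_\rho,u_\rho,c_m)/Y$ together with, for each $i$, the pair $(M_i,d_i)$. The essential observation is that $\mathfrak{V}_i(Y) = \ot_\rho L_\rho^{\ot b_{i\rho}}$ by definition, so the isomorphism $d_i:\ot_\rho L_\rho^{\ot b_{i\rho}}\ot M_i^{\ot r_i}\to \uIC$ is exactly the datum required to make $(M_i,d_i)$ an object of $\sqrt[r_i]{\mathfrak{V}_i}(Y)$. On morphisms, a $\bD$-morphism $(f,\ga_\rho,\de_i)$ is sent to the pair $((f,\ga_\rho),\de_i)$; the last commutative square in Definition~\ref{deltacoll2} is precisely the compatibility required for $\de_i$ to be a morphism in $\sqrt[r_i]{\mathfrak{V}_i}$ above $(f,\ga_\rho)$.

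Full faithfulness is then immediate: morphisms in the fibered product lying over a given $\bD_{\rm rig}$-morphism $(f,\ga_\rho)$ are by definition tuples $(\de_i)$ satisfying exactly the cartesian and compatibility conditions appearing in Definition~\ref{deltacoll2}. For essential surjectivity, starting from an object on the right, namely $(L_\rho,u_\rho,c_m)/Y$ with a choice of $(M_i,d_i)\in \sqrt[r_i]{\mathfrak{V}_i}(Y)$ for each $i$, the tuple $(L_\rho,u_\rho,c_m,M_i,d_i)/Y$ trivially satisfies all axioms of Definition~\ref{deltacoll} and is a preimage under $\Phi$.

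Since each $\sqrt[r_i]{\mathfrak{V}_i}\to \cC_{\bD_{\rm rig}}$ is a $\mu_{r_i}$-gerbe (cf.\ \cite{AGVgwdms}), and a fibered product of gerbes banded by abelian groups over a common base is again a gerbe banded by the product, the right-hand side of $\Phi$ is a gerbe banded by $\times_{i=1}^R\mu_{r_i}$ over $\cC_{\bD_{\rm rig}}$; transporting along $\Phi$ shows the same for $\cR$. The argument is essentially bookkeeping---the notion of $\bD$-collection was engineered so that extending a $\bD_{\rm rig}$-collection to a $\bD$-collection is literally the same as choosing an object in each root gerbe---so the only mild obstacle is verifying the band assertion rigorously, which requires identifying the inertia action on $\Phi$-automorphisms rather than just the fiberwise automorphism groups.
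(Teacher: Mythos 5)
Your proof is correct, but it takes a genuinely different route from the paper's. You build an explicit equivalence of categories fibered over $\mcal{C}_{\mathbf{\De}_{\rm rig}}$ between $\mcal{C}_{\mathbf{\De}}$ and the fibered product of root gerbes, exploiting the fact that Definitions \ref{deltacoll} and \ref{deltacoll2} were set up so that the data $(M_i,d_i)$ and the compatibility of the $\de_i$ are \emph{verbatim} the data and morphisms of $\sqrt[r_i]{\mathfrak{V}_i}$; the gerbe and band statements are then transported along this equivalence. The paper instead argues cohomologically: it verifies directly that $\cR$ is a gerbe, computes the relative automorphism group $\{(\de_1,\dots,\de_R)\mid \de_i^{r_i}=1\}$ to obtain the banding, and then invokes the classification of banded gerbes by $H^2(\mcal{C}_{\mathbf{\De}_{\rm rig}},\times_i\mu_{r_i})$ together with the decomposition of this group as $\times_i H^2(\mcal{C}_{\mathbf{\De}_{\rm rig}},\mu_{r_i})$ (Giraud) to match the class of $\cR$ with that of the product of root gerbes. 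Your approach is more elementary and yields the stronger, explicit statement of an equivalence rather than equality of cohomology classes; the paper's approach has the advantage that the same $H^2$ decomposition is reused immediately afterwards in Theorem \ref{classificazione} to decide when two data $\mathbf{\De}$, $\mathbf{\De}'$ give isomorphic banded gerbes, a question for which the cohomological language is the natural one. Your closing caveat about the band is handled automatically in your setup: since $\Phi$ acts as the identity on the $\de_i$'s, it visibly intertwines the two identifications of the relative inertia with $\times_i\mu_{r_i}$.
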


\medskip

\begin{proof} It is straightforward to verify that  $\cR$ is a gerbe.
Let now  $(L_{\rho},u_{\rho},c_m,M_i,d_i)/Y$ be a  $\mathbf{\De}$-collection.
The set of its automorphisms over the identity of $(L_{\rho},u_{\rho},c_m)/Y$ is
$$
\{ (\de_1,...,\de_R) \in H^0(Y, {\mcal{O}^*_Y})^R \, | \quad \de_i^{r_i}=1
\quad \mbox{for any} \quad i\in \{1,...,R\} \},
$$
then the canonical inclusions $\mu_{r_i}\subset \IC^*$, for $i\in \{ 1,...,R \}$, give the 
$\mu_{r_1}\times ... \times \mu_{r_R}$-banding.

Isomorphism classes of gerbes banded by
$\mu_{r_1}\times ... \times \mu_{r_R}$ are in 1-to-1 correspondence
with the second cohomology group $H^2(\mcal{C}_{\mathbf{\De}_{\rm rig}}, \times_{i=1}^R \mu_{r_i})$
\cite{Gir}.
There is an isomorphism
\begin{equation}\label{IV3.1.9}
H^2(\mcal{C}_{\mathbf{\De}_{\rm rig}}, \times_{i=1}^R \mu_{r_i}) \ra
\times_{i=1}^R H^2(\mcal{C}_{\mathbf{\De}_{\rm rig}},
\mu_{r_i})
\end{equation}
whose inverse associates the classes of the gerbes
$\mcal{G}_1,...,\mcal{G}_R$ to the class of the fibered product
$\mcal{G}_1 \times_{\mcal{C}_{\mathbf{\De}_{\rm
      rig}}}...\times_{\mcal{C}_{\mathbf{\De}_{\rm rig}}}\mcal{G}_R$.
The explicit description of 
\eqref{IV3.1.9} (as given for example in \cite{Gir} IV Proposition 2.3.18) shows that the
image of the class of $\cR$ is the class of \eqref{gd}.
This complete the proof. \end{proof}

\vspace{0.5cm}

\begin{remark}\nf We can also see $\cR$ as the rigidification of ${\cC}_{\bD}$ 
with respect to the constant sheaf $\times_{i=1}^R \mu_{r_i}$ \cite{ACV}
(this accounts for the subscript $\rm rig$ in the label $\bD_{\rm rig}$).
\end{remark}

\newpage

\begin{theorem}\label{classificazione}
For any toric DM-stack $\mcal{X}$, there exists a combinatorial data
$\mathbf{\De}=\{N,\De,a_{\rho},r_i,b_{i\rho}\}$ which satisfies the condition
\begin{equation}\label{cfag}
r_1 | r_2 | ... | r_R
\end{equation}
 such that $\mcal{X} \cong \mcal{C}_{\mathbf{\De}}$ (here $r_i | r_{i+1}$ denotes that $r_i$
divides $r_{i+1}$). 

Furthermore, if 
$\mathbf{\De}'=\{N,\De,a_{\rho},r_i^{'},b_{i\rho}^{'}\}$ is another combinatorial
data which  satisfies
\eqref{cfag}, then $\mcal{C}_{\mathbf{\De}} \cong \mcal{C}_{\mathbf{\De}'}$
as banded gerbes if and only if
$$
R=R', \quad r_i=r_i^{'} \quad \mbox{for all} \quad i,
$$ 
and the class
$$
\left[ \sum_{\rho} (b_{i\rho}-b_{i\rho}')e_{\rho}^* \right] \in (\IZ^{\De(1)})^*/M
$$
is divisible by $r_i$, for any $i \in \{ 1,...,R \}$.
Here $M$ is embedded in $(\IZ^{\De(1)})^*$ by the dual of the morphism
$\IZ^{\De(1)} \ra N$, $e_{\rho}\mapsto a_{\rho}$.
\end{theorem}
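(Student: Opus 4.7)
The plan is to split the theorem into existence and classification parts. Existence follows immediately from Theorem \ref{mainth} combined with the structure theorem for finitely generated abelian groups, while classification reduces, via Proposition \ref{ger} and the Kummer sequence, to an explicit computation in $\mathrm{Pic}(\cC_{\bD_{\rm rig}})$.

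For existence, write the given toric DM-stack as $\cX_{\bS}$ for a stacky fan $\bS=(\widetilde N,\De,\{\beta_j\})$ as in \cite{BCS}. Putting $\widetilde N$ in invariant-factor form gives $\widetilde N\cong N\oplus\bigoplus_{i=1}^R\IZ/r_i$ with $N$ free and $r_1\mid\cdots\mid r_R$, and each generator decomposes as $\beta_j=(a_{\rho_j},\overline b_{1\rho_j},\ldots,\overline b_{R\rho_j})$. Choosing integer lifts $b_{i\rho_j}\in\IZ$ of $\overline b_{i\rho_j}\in\IZ/r_i$ yields data $\bD=\{N,\De,a_\rho,r_i,b_{i\rho}\}$ satisfying \eqref{cfag}, and Theorem \ref{mainth} gives $\cX\cong\cC_{\bD}$.

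For the classification, note that $\bD$ and $\bD'$ share the same rigidification data by hypothesis, so by Proposition \ref{ger} both $\cC_{\bD}$ and $\cC_{\bD'}$ are gerbes over the common base $\cC_{\bD_{\rm rig}}$, banded by $\prod_i\mu_{r_i}$ and $\prod_i\mu_{r_i'}$ respectively. An isomorphism of banded gerbes induces an isomorphism of the underlying finite abelian bands; combined with the normalisation \eqref{cfag}, uniqueness of invariant factors forces $R=R'$ and $r_i=r_i'$. With the bands identified, by \eqref{IV3.1.9} the isomorphism classes are detected componentwise in $\prod_i H^2(\cC_{\bD_{\rm rig}},\mu_{r_i})$, so the classification reduces to comparing $[\sqrt[r_i]{\mathfrak V_i}]$ with $[\sqrt[r_i]{\mathfrak V_i'}]$ for each $i$.

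To match these classes explicitly, apply the Kummer sequence $1\to\mu_{r_i}\to\mathbb{G}_m\to\mathbb{G}_m\to 1$ on the smooth DM-stack $\cC_{\bD_{\rm rig}}$: it identifies the class of $\sqrt[r_i]{\mathfrak V_i}$ with $[\mathfrak V_i]\bmod r_i$ in $\mathrm{Pic}(\cC_{\bD_{\rm rig}})/r_i\mathrm{Pic}(\cC_{\bD_{\rm rig}})$. Hence the banded gerbes agree in the $i$-th factor iff $[\mathfrak V_i\ot(\mathfrak V_i')^{\vee}]\in r_i\mathrm{Pic}(\cC_{\bD_{\rm rig}})$. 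Using the presentation $\cC_{\bD_{\rm rig}}=[Z/G]$ from the proof of Theorem \ref{mainth} (specialised to $R=0$), the Picard group equals the character group of the diagonalisable $G$, which via \eqref{sests} is precisely $(\IZ^{\De(1)})^*/M$ with $M\hookrightarrow(\IZ^{\De(1)})^*$ by $m\mapsto\sum_\rho\langle m,a_\rho\rangle e_\rho^*$; under this identification $\mathfrak L_\rho\leftrightarrow[e_\rho^*]$, so $\mathfrak V_i=\bigotimes_\rho\mathfrak L_\rho^{\ot b_{i\rho}}$ corresponds to $[\sum_\rho b_{i\rho}e_\rho^*]$, and the cohomological condition becomes exactly the displayed divisibility in $(\IZ^{\De(1)})^*/M$. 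The main obstacle will be to execute the Picard-group identification and the Kummer correspondence carefully on the DM-stack $\cC_{\bD_{\rm rig}}$; both are standard for quotients of quasi-affine schemes by diagonalisable groups (using in particular that $H^1(Z,\mathbb{G}_m)=0$ for the relevant open $Z\subset\IA^n$), but should be spelled out before invoking them.
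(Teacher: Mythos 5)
Your proposal is correct and follows essentially the same route as the paper's proof: existence via Theorem \ref{mainth} plus the invariant-factor decomposition of the finitely generated group underlying the stacky fan, and the classification via Proposition \ref{ger}, the componentwise identification \eqref{IV3.1.9}, and the presentation $0\to M\to(\IZ^{\De(1)})^*\to{\rm Pic}(\cC_{\bD_{\rm rig}})\to 0$. The only difference is that you make explicit (via the Kummer sequence) the step the paper states without proof, namely that $\sqrt[r_i]{\mathfrak{V}_i}\cong\sqrt[r_i]{\mathfrak{V}_i'}$ as $\mu_{r_i}$-gerbes iff $\mathfrak{V}_i\ot(\mathfrak{V}_i')^{-1}$ is an $r_i$-th power in the Picard group.
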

\begin{proof} Given $\mcal{X}$, the existence of
$\mathbf{\De}$ satisfying \eqref{cfag} follows from 
Theorem \ref{mainth}, Proposition \ref{ger}, and the classification
of finite abelian groups.
The condition \eqref{cfag} determines the isomorphism class
of the generic automorphism group of $\mcal{C}_{\mathbf{\De}}$. Hence
$\mcal{C}_{\mathbf{\De}} \cong \mcal{C}_{\mathbf{\De}'}$ implies that
$R=R'$ and $r_i=r_i^{'}$ for all $i$.
The same argument in the proof of  Proposition \ref{ger} shows that 
$\mcal{C}_{\mathbf{\De}} \cong \mcal{C}_{\mathbf{\De}'}$
as $\times_i \mu_{r_i}$-gerbes if and only if
$\sqrt[r_i]{\mathfrak{V}_i} \cong \sqrt[r_i]{\mathfrak{V}_i^{'}}$
as $\mu_{r_i}$-gerbe
for any $i\in \{ 1,...,R \}$. This is equivalent to the fact  that
$\mathfrak{V}_i \ot {\mathfrak{V}_i^{'}}^{-1}$ is a $r_i$-th power of
some element in ${\rm Pic}(\mcal{C}_{\mathbf{\De}_{\rm rig}})$. Now
the result follows from the fact that ${\rm
Pic}(\mcal{C}_{\mathbf{\De}_{\rm rig}})$ has the following
presentation \cite{BH}, \cite{DH}:
$$
0\ra M \ra (\IZ^{\De(1)})^* \ra {\rm Pic}(\mcal{C}_{\mathbf{\De}_{\rm
    rig}}) \ra 0,
$$
where the morphism $M\ra (\IZ^{\De(1)})^*$ is the dual of $\IZ^{\De(1)}\ra N$, 
$e_{\rho}\mapsto a_{\rho}$. \end{proof}

\vspace{0.2cm}

\begin{remark}\nf Notice that the first part of Thm. \ref{classificazione} holds in 
the more general situation where $\cX$ is
a ``smooth toric DM stack'' in the sense of \cite{FMN} (see the Appendix).
\end{remark}

\section{The torus action}
Any toric variety $X$ contains an algebraic torus $T$ as open dense
subvariety such that the action of $T$ on itself by multiplication extends
to an action on $X$. In this Section we show that an analogous
property holds for a toric DM-stack
once replacing $T$ with a \textit{Picard stack} $\mcal{T}$. 
Picard stacks (originally called \textit{champs de Picard}) were defined 
in Expos\'e XVIII  \cite{SGA4};
we refer to this paper for the definition and further properties. 
We will denote by $T$ the torus ${\rm Spec}(\IC [M])$, then any morphism
$g: Y \ra T$ is identified with the corresponding group homomorphism
$M \ra H^0(Y,{\mcal{O}}^*_Y)$, $m \mapsto g_m:= g^*(\chi^m)$.

Let us consider
$$
\mcal{T}:= \sqrt[r_1]{\uIC} \times_T ... \times_T \sqrt[r_R]{\uIC},
$$
and introduce the map
\begin{equation}
\mfm: \mcal{T} \times_{({\rm Sch})} \mcal{T}  \ra  \mcal{T}
\end{equation}
defined on objects by
\begin{equation*}
\mfm \left( (g, N_i, e_i)/Y,(g', N_i ', e_i ')/Y \right) := (g \cd g', N_i \ot N_i ', e_i \ot e_i ')/Y, 
\end{equation*}
and on arrows by $\mfm (\e_i, \e_i '):= \e_i \ot \e_i '$. 
The \textit{associativity} is expressed in terms
of a natural transformation
\begin{equation*}
\si: \mfm\circ (\mfm \times {\rm id}_{\mcal{T}}) \Rightarrow \mfm \circ ({\rm id}_{\mcal{T}} \times \mfm),
\end{equation*}
and the \textit{commutativity} with a natural transformation
\begin{equation*}
\tau : \mfm \Rightarrow \mfm\circ C,
\end{equation*}
where $C$ is the functor that exchanges the factors.
Here we choose the standard natural transformations $\si$ and $\tau$.
Then $({\mcal{T}},\mfm,\si,\tau)$ is a Picard stack.
Let us denote with
$e$  the neutral element of $\mcal{T}$, which is unique up to unique isomorphism.

\bigskip

The action of a group on a stack has been defined
in \cite{Ro}. The extension to the case of a group stack is straightforward,
the only difference here is that the action must be compatible 
with the associativity of $\cT$ which is expressed in terms of $\si$.
We follow the approach of \cite{Ro}, for a different one 
we refer to \cite{FMN}.

The Picard stack $\mcal{T}$ acts on $\mcal{C}_{\mathbf{\De}}$. 
The action is given by the functor 
\begin{equation*}
\mfa : \mcal{C}_{\mathbf{\De}} \times_{\rm (Sch)} \mcal{T}  \ra  \mcal{C}_{\mathbf{\De}} 
\end{equation*}
which is defined on objects as
$$
\mfa \left( (L_{\rho},u_{\rho},c_m,M_i,d_i) ,
(g_m,N_i,e_i) \right)/Y := (L_{\rho},u_{\rho},c_m\cd g_m,M_i\ot N_i,d_i\ot e_i)/Y,
$$
and on arrows as
$$
\mfa \left( (f,\ga_{\rho},\de_i), \e_i \right) := (f,\ga_{\rho},\de_i \ot \e_i ),
$$
and natural transformations
$\al: \mfa \circ ( {\rm id}_{\mcal{C}_{\mathbf{\De}}}\times \mfm ) \Rightarrow \mfa \circ (\mfa
\times {\rm id}_{\mcal{T}})$ and $\be: {\rm id}_{\mcal{C}_{\mathbf{\De}}} \Rightarrow \mfa \circ 
({\rm id}_{\mcal{C}_{\mathbf{\De}}} \times e ) $ such that  
the following diagrams are $2$-commutative
(we put in each square (resp. triangle) the appropriate natural transformation):
$$
\xymatrix{
\cC \times \cT \times \cT \times \cT \ar[dd]^{\mfa \times {\rm id_{\cT}}\times {\rm id_{\cT}} } 
\ar[dr]^{{\rm id_{\cC}}\times \mfm \times {\rm id_{\cT}}} \ar[rr]^{{\rm id_{\cC}}\times {\rm id_{\cT}} \times \mfm} & &  
\cC \times \cT \times \cT \ar'[d]^{\mfa \times {\rm id_{\cT}}}[dd] \ar[dr]^{{\rm id_{\cC}}\times \mfm} \\
& \cC \times \cT \times \cT \ar[dd]^<<<<<<{\mfa \times {\rm id_{\cT}}} \ar[rr]^<<<<<<{{\rm id_{\cC}}\times \mfm} & & 
\cC \times \cT \ar[dd]^{\mfa} \\
\cC \times \cT \times \cT \ar[dr]^{\mfa \times {\rm id_{\cT}}} \ar'[r]^>>>>>>>{{\rm id_{\cC}}\times \mfm} [rr]
& & \cC \times \cT \ar[dr]^{\mfa} & \\
& \cC \times \cT \ar[rr]^{\mfa} & & \cC 
}
$$
$$
\xymatrix{
\cC \ar@/_/[ddr]^>>>>>{{\rm id}_{\cC}\times e} \ar@/^/[drr]^{{\rm id}_{\cC}\times e} 
\ar[dr]^{{{\rm id}_{\cC}\times e} \times e} \\
& \cC \times \cT \times \cT \ar[d]^{\mfa \times {\rm id}_{\cT}} \ar[r]^{{\rm id}_{\cC}\times \mfm} &
\cC \times \cT \ar[d]^{\mfa} \\
& \cC \times \cT \ar[r]^{\mfa} & \cC
}
$$
where $\cC$ denotes $\cC_{\bD}$.
There is a standard choice for $\al$ and $\be$ which satisfy these conditions,
we adopt this one.

\begin{proposition}\label{dt}
There is a morphism 
\begin{equation*}\label{ae}
\mcal{T}\ra \mcal{C}_{\mathbf{\De}}
\end{equation*}
whose image is open and dense with respect to  the small \'etale site.
The restriction of $\mfa$ to $\mcal{T}$ with respect to the above morphism
is isomorphic to $\mfm$. 
\end{proposition}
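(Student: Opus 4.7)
The plan is to construct $\iota : \mcal{T} \to \mcal{C}_{\bD}$ on objects by sending $(g, N_i, e_i)/Y$, where $g : Y \to T$ corresponds to the group homomorphism $m \mapsto g_m \in H^0(Y, \mcO_Y^*)$ and $(N_i, e_i) \in \sqrt[r_i]{\uIC}(Y)$, to the $\bD$-collection whose line bundles $L_{\rho}$ are all $\uIC$, whose sections $u_{\rho}$ are the constant nowhere-vanishing section $1$, whose isomorphisms $c_m : \ot_{\rho} \uIC^{\ot \lan m, a_{\rho}\ran} = \uIC \to \uIC$ are given by multiplication by $g_m$, and whose remaining data $(M_i, d_i)$ is $(N_i, e_i)$ under the canonical identification $\ot_{\rho} L_{\rho}^{\ot b_{i\rho}} \ot N_i^{\ot r_i} = N_i^{\ot r_i}$. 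The definition of $\iota$ on arrows is forced by the definition of morphisms of $\bD$-collections. Condition (1) of Definition \ref{deltacoll} becomes $g_m \cd g_{m'} = g_{m+m'}$, which is the homomorphism property of $g$; condition (2) is immediate since every $u_{\rho}$ is nowhere-vanishing.

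To show the image of $\iota$ is open and dense, I would use the presentation $\mcal{C}_{\bD} \cong [Z/G]$ from Theorem \ref{mainth}, where $Z \subset \IC^n$ is the $G$-invariant open subscheme of the BCS construction, and note that the open torus orbit of the coarse moduli space $X$ has preimage $(\IC^*)^n \subset Z$. Tracing through the equivalence $F$ of \eqref{iso}, a $\bD$-collection lies in the essential image of $\iota$ exactly when all its sections $u_{\rho}$ are nowhere-vanishing, which via $F$ corresponds to the $G$-equivariant morphism $\ti{t} : P \to Z$ factoring through $(\IC^*)^n$. This identifies $\mcal{T}$ with the open substack $[(\IC^*)^n/G] \subset [Z/G]$; openness and density then follow immediately from those of $(\IC^*)^n \subset Z$, and representability from standard facts on quotient stacks.

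For the compatibility with the action, a direct computation on objects of $\mcal{T}(Y) \times \mcal{T}(Y)$ shows that both $\mfa \circ (\iota \times {\rm id}_{\mcal{T}})$ and $\iota \circ \mfm$ send $((g, N_i, e_i), (g', N_i', e_i'))$ to the $\bD$-collection $(\uIC, 1, g_m \cd g_m', N_i \ot N_i', e_i \ot e_i')/Y$, using $(gg')_m = g_m g_m'$; the analogous check on morphisms is trivial, and together these furnish the required natural isomorphism. The main obstacle will be making the second step fully rigorous, namely verifying that $Z \times_{\mcal{C}_{\bD}} \mcal{T}$, formed via the atlas coming from \eqref{iso}, is precisely the open subscheme $(\IC^*)^n \subset Z$; once this identification is in hand, both the openness/density assertion and the compatibility with $\mfm$ follow by routine bookkeeping with the equivalence $F$.
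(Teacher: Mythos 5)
Your construction of $\iota$ on objects and arrows, the verification of conditions (1) and (2) of Definition \ref{deltacoll}, and the direct computation identifying $\mfa\circ(\iota\times\mathrm{id}_{\cT})$ with $\iota\circ\mfm$ all coincide with the paper's argument. The one place where you genuinely diverge is the openness/density step. The paper argues presentation-freely: it takes an arbitrary \'etale cover $\pi:U\ra\cC_{\bD}$, reads off the corresponding $\bD$-collection $(L_\rho,u_\rho,c_m,M_i,d_i)/U$, sets $U':=\{x\in U\mid u_\rho(x)\neq 0\ \forall\rho\}$, and checks that $U'\cong U\times_{\cC_{\bD}}\cT$; openness and density of $U'\subset U$ are then immediate from condition (2) of Definition \ref{deltacoll} and the structure of the fan. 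You instead pass through the quotient presentation $\cC_{\bD}\cong[Z/G]$ of Theorem \ref{mainth} and identify the image with $[(\IC^*)^n/G]$. This is consistent with what the paper does later (the isomorphism $F_{|\cG}:\cG\ra\cT$ in \eqref{isot}), and the fiber-product computation you defer is indeed routine: an isomorphism of the two induced collections forces all coordinates of $Y\ra Z$ to be invertible and then rigidly determines the remaining data. Two small caveats, neither fatal: (i) the presentation $[Z/G]$ is only available from Theorem \ref{mainth} when the rays span $N_{\IQ}$; in the degenerate case you must first invoke the splitting $\cC_{\bD}\cong\cC_{\bD'}\times\mathrm{Spec}(\IC[N'^{\perp}])$ and note that the extra torus factor is absorbed harmlessly, whereas the paper's atlas argument covers both cases uniformly; (ii) $Z\ra[Z/G]$ is a smooth atlas rather than an \'etale one, so you should say explicitly that being an open immersion with dense image may be checked after base change along a smooth surjection. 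The paper's route buys uniformity and avoids the quotient presentation entirely; yours buys an explicit identification of the open substack that is reused in the subsequent comparison of torus actions.
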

\begin{proof}
Let $Y\ra \mcal{T}$ be a morphism given by $g:Y \ra T$,
$N_i$ and $e_{i}: N_i^{\ot r_i} \ra \uIC$,
for $i\in \{ 1,...,R \}$. Set $L_{\rho}:= \uIC$ and $u_{\rho}:= 1$
for all $\rho$, $c_m := g_{m}$, $m\in M$.
Then $(L_{\rho},u_{\rho},c_m,N_i,e_{i})$ is a $\mathbf{\De}$-collection.
This defines the morphism  on objects, on arrows
it sends $(f,\e_i)$ to $(f, {\rm id}, \e_i)$.

The morphism  is open with respect to the small \'etale site.
Indeed, let $\pi: U\ra \mcal{C}_{\mathbf{\De}}$ be an \'etale cover.
It corresponds to a $\mathbf{\De}$-family $(L_{\rho},u_{\rho},c_m,M_i,d_{i})/U$.
Set $U':= \{ x\in U  | u_{\rho}(x)\not= 0, \forall \rho \}$.
The restriction of $(L_{\rho},u_{\rho},c_m,M_i,d_{i})/U$ to $U'$ gives
a morphism $U' \ra \mcal{T}$ such that the following diagram is
$2$-Cartesian
\begin{equation*}
\begin{CD}
U' @>>> U \\
@VVV @VVV \\
\mcal{T} @>>>  \mcal{C}_{\mathbf{\De}}.
\end{CD}
\end{equation*}

The compatibility between $\mfa$ and $\mfm$ follows directly from the definition.
This completes the proof. \end{proof}

\vspace{0.5cm}

We proceed by observing that using the definition of \cite{BCS} it is possible to give
another description of the torus action. 
Indeed let $\phi :G \ra (\IC^*)^n$ be the composition of 
$\varphi$ in \eqref{sests} with the projection to $(\IC^*)^n$. 
Recall that $\cX_{\bS}=[Z/G]$, where the $G$-action is induced by $\phi$.
Let us consider the Picard stack
$\cG: = [(\IC^*)^n/G]$ (see 1.4.11 of Expos\'e XVIII in \cite{SGA4}).
Let ${\mcal{G}}^{\rm pre}$ and ${\mcal{X}}_{\mathbf{\Si}}^{\rm pre}$
be the pre-stacks associated to the groupoids
$((\IC^*)^n\times G \rightrightarrows (\IC^*)^n )$ and
$(Z\times G \rightrightarrows Z)$ respectively. ${\mcal{G}}^{\rm pre}$ 
acts on ${\mcal{X}}_{\mathbf{\Si}}^{\rm pre}$ in the obvious way
and the stackification of this action gives an action of
$\mcal{G}$ on ${\mcal{X}}_{\mathbf{\Si}}$. We denote
by $\mfa_{\cX}$ this action.

We want to compare the torus actions previously defined 
on $\mcal{X}_{\mathbf{\Si}}$ and on $\mcal{C}_{\mathbf{\De}}$.
Note that the restriction of \eqref{iso} to $\mcal{G}$ gives an isomorphism 
\begin{equation}\label{isot}
F_{|\mcal{G}}: \mcal{G}\ra {\mcal{T}}.
\end{equation} 
Moreover there is a natural transformation
\begin{equation}\label{e}
\nu:\mfa \circ (F\times F_{|\cG}) \Rightarrow F\circ \mfa_{\cX}
\end{equation}
defined as follows.
Consider the diagram
\begin{equation}\label{pree}
\begin{CD}
\cX^{\rm pre} \times \cG^{\rm pre} @>>> \cC_{\bD} \times \cT \\
@V\mfa_{\cX^{\rm pre}}VV @VV\mfa V \\
\cX^{\rm pre} @>>> \cC_{\bD}
\end{CD}
\end{equation}
where each row is the composition of $(F\times F_{|\cG})$ ($F$ resp.)
with the corresponding stackification morphism. 
There is a canonical  natural transformation $\nu^{\rm pre}$ which makes 
\eqref{pree} $2$-commutative. Then we define $\nu$ in \eqref{e} to be 
the unique natural transformation induced by $\nu^{\rm pre}$.
Finally we have the following.
\begin{proposition}
The isomorphism \eqref{iso} together with $\nu$ is $F_{|{\mcal{G}}}$-equivariant.
\end{proposition}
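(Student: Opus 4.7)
The strategy is to reduce the equivariance condition to a check at the pre-stack level, where both the multiplication on $\cG$ and the action $\mfa_{\cX}$ become strict actions of ordinary groupoids, and then to invoke the universal property of stackification that was already used to define $\nu$.

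First, I recall what $F_{|\cG}$-equivariance means. It is the datum $(F, F_{|\cG}, \nu)$ subject to two coherence axioms: a pentagon relating $\nu$ to the associator $\al$ on $\cC_{\bD}$ and (via $F_{|\cG}$) the associator $\si$ on $\cT$, and a unit triangle relating $\nu$ to $\be$ and the unit of $\cG$. Since $\al$, $\be$, $\si$ and the corresponding data on $\cG$ are all the standard canonical choices, both axioms will be automatic as soon as $\nu$ is induced by a strictly equivariant functor at the pre-stack level.

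Next, I verify the strict equivariance of $F^{\rm pre} : \cX_{\bS}^{\rm pre} \to \cC_{\bD}$ with respect to $\mfa_{\cX^{\rm pre}}$ and the pre-stack version of $\mfa$. Unpacking the construction of $F$ in the proof of Theorem \ref{mainth}: an object of $\cX_{\bS}^{\rm pre}(Y)$ corresponds to a pair $(P,t)$ with $P$ the trivial $G$-torsor and $t : P \to Z$ equivariant, and the $\cG^{\rm pre}$-action twists $t$ by an element $(\lam,\mu) \in (\IC^*)^n \times (\IC^*)^R$. The associated line bundles $L_k$, $M_i$ and the sections $u_\rho$ depend only on $P$ and on the $(\IC^*)^n$-component of the twist in a way that is accounted for by the identity change of trivialization; what genuinely changes is the isomorphism $\al$ in \eqref{nls}, which gets multiplied by the image $\psi(\lam,\mu)$. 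By formula \eqref{psi} this image multiplies $c_{e_l^*}$ by $\lam_1^{a_{l1}}\cdots \lam_n^{a_{ln}}$ and $d_i$ by $\mu_i^{r_i}\lam_1^{b_{i1}}\cdots \lam_n^{b_{in}}$, which is precisely the effect of $\mfa$ on the $(c_m,M_i,d_i)$-part of a $\bD$-collection under the element of $\cT$ corresponding to $F_{|\cG}(\lam,\mu)$. Hence $F^{\rm pre}$ commutes strictly with the two actions and $\nu^{\rm pre}$ may be taken to be the identity 2-cell.

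Finally, since $\nu$ is defined as the unique natural transformation induced from $\nu^{\rm pre}$ through stackification, and the associators, units, and multiplication functors at the stack level are all obtained by stackifying their pre-stack counterparts, the pentagon and unit coherence axioms for $\nu$ reduce to the corresponding axioms for $\nu^{\rm pre}$; the latter hold trivially because $\nu^{\rm pre}$ is the identity. The main bookkeeping step will be verifying the explicit behavior of the isomorphism $\al$ in \eqref{nls} under a torus twist and matching it with the formulas defining $\mfa$ on $\cC_{\bD}$; once this identification is in hand, equivariance follows formally from the uniqueness inherent in stackification.
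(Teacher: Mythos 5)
Your argument follows the paper's proof essentially verbatim: both reduce the equivariance (coherence) diagrams to the pre-stack level, where the actions are strict and the relevant diagrams are checked to be $1$-commutative by direct computation, and then descend through stackification. The only difference is that you sketch the explicit computation (the effect of a twist on the trivialization $\al$ of \eqref{nls} and its match with the formula for $\mfa$), which the paper leaves as ``a direct computation.''
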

\begin{proof} Following \cite{Ro}, we have to prove that the diagrams below are 
$2$-commutative with respect to the natural transformations  previously defined:
$$
\xymatrix{
\cX \times \cG \times \cG \ar[dd]^{\mfa \times {\rm id}_{\cG}}
\ar[dr]^{F\times F_| \times F_|} \ar[rr]^{{\rm id}_{\cX}\times \mfm} & &  
\cX \times \cG \ar'[d]^{\mfa }[dd] \ar[dr]^{F\times F_|} \\
& \cC \times \cT \times \cT \ar[dd]^<<<<<<{\mfa \times {\rm id}_{\cT}} \ar[rr]^<<<<<<{{\rm id}_{\cC}\times \mfm} & & 
\cC \times \cT \ar[dd]^{\mfa} \\
\cX \times \cG \ar[dr]^{F \times F_|} \ar'[r][rr]^{\mfa} & & \cX \ar[dr]^{F} & \\
& \cC \times \cT \ar[rr]^{\mfa} & & \cC 
}
$$
$$
\xymatrix{
\cX \times \cG \ar[dr]^{F\times F_|} \ar[rr]^{\mfa} & & \cX \ar[dr]^{F}\\
& \cC \times \cT \ar[ur] \ar[rr]^{\mfa} & & \cC \\
\cX \ar[uu]^{{\rm id}_{\cX} \times e} \ar@{-}[ur]^{{\rm id}_{\cX}} \ar[dr]^{F}\\
& \cC \ar[uu]^{{\rm id}_{\cC}\times e} \ar[uurr]^{{\rm id}_{\cC}}
}
$$
With abuse of notation,  we have denoted with the same $\mfm$ the two multiplications 
of the Picard stacks and with the same $\mfa$ the two actions, 
$\cC$ denotes $\cC_{\bD}$.
Notice that it is enough to prove the $2$-commutativity of the above diagrams 
with $\cX$ and $\cG$ being replaced by $\cX^{\rm pre}$ and $\cG^{\rm pre}$ 
respectively. A direct computation shows that with these replacements
the above diagrams are indeed $1$-commutative, hence the result follows. \end{proof}

\section{Morphisms between toric stacks} 
 In this Section we give a 
description of morphisms between
toric DM-stacks parallel to the one given in  Theorem 3.2 of \cite{cox}
in the context of toric varieties. 
We need some introductory notations.
Let $\cY:=\cC_{\bD'}$ be the toric DM-stack defined by $\bD' := \{ N',\De',a_{\rho}'\}$
(notice that here we set $R'=0$). We assume that the coarse moduli space 
$Y$ of $\cY$ is a complete variety. Let $\cX:=\cC_{\bD}$ be a toric DM-stack
such that the $1$-dimensional rays generate $N_{\IQ}$.
Let us fix the presentations
$\cY=[Z'/G']$ and $\cX=[Z/G]$ as in \cite{BCS}.

The Picard group of $\cY$ is isomorphic 
to the group of characters of $G'$ \cite{BH}, \cite{DH}.
The isomorphism associates to the character $\chi$
the isomorphism class $[{\mcal{L}}(\chi)]\in {\rm Pic}(\mcal{Y})$
of the trivial line bundle on $Z'$
with the $G'$-linearization given by $\chi$. We use this isomorphism
to identify the two groups.

Considered $\cY$, we recall that there are distinguished elements
$[\cL_{\rho}]\in {\rm Pic}(\mcal{Y})$, for any $\rho \in \De'(1)$.
Let $\phi':G' \ra (\IC^*)^{\De'(1)}$
be the $(\IC^*)^{\De'(1)}$-component of $\varphi'$  in \eqref{sests}, 
the components $(\phi')_{\rho}$ of $\phi'$
are characters of $G'$, then the $[\cL_{\rho}]$'s are the corresponding
isomorphism classes of line bundles.

The homogeneous coordinate ring of $\mcal{Y}$ is
defined in \cite{DH}, it is 
the polynomial ring $S^{\mcal{Y}}:=\IC[z_{\rho}]$
in the variables $z_{\rho}$, where $\rho \in  \De'(1)$.
It is endowed with a  ${\rm Pic}(\mcal{Y})$-grading:
the monomial $\prod_{\rho}z_{\rho}^{l_{\rho}}$ has degree 
$\prod_{\rho} [{\mcal{L_{\rho}}}]^{l_{\rho}} \in {\rm Pic}(\mcal{Y})$.
For any $\chi \in  {\rm Pic}(\mcal{Y})$, let us denote by $S^{\mcal{Y}}_{\chi}$
the subset of $S^{\mcal{Y}}$ consisting of homogeneous polynomials of degree $\chi$.
There is an isomorphism of complex vector spaces \cite{DH}
$$
H^0(\mcal{Y},{\mcal{L}}(\chi)) \cong S^{\mcal{Y}}_{\chi}
$$
which we use to identify them.
With these notations we can now state the following
\begin{theorem}\label{mappe polinomiali}
Let $P_{\rho}\in S^{\mcal{Y}}$ be homogeneous polynomials indexed by $\rho \in \De(1)$, 
and let $\chi_i \in {\rm Pic}(\mcal{Y})$ for $i\in \{ 1,...,R \}$ such that
\begin{enuma}
\item If $P_{\rho}\in S^{\mcal{Y}}_{\chi_{\rho}}$, then $\sum_{\rho} \chi_{\rho}\ot a_{\rho}=0$
in ${\rm Pic}(\mcal{Y})\ot_{\IZ} N$, and $\prod_{\rho} \chi^{b_{i\rho}}_{\rho}\cd\chi_i^{r_i}=1$ 
in ${\rm Pic}(\mcal{Y})$ for any $i$,
\item $(P_{\rho}(z))\in Z$ whenever $z\in Z'$.
\end{enuma}
Let $\tilde{f}(z):= (P_{\rho}(z))\in \IC^{\De(1)}$. Then there is 
a morphism $f:\mcal{Y}\ra \mcal{X}$ such that the diagram 
\begin{equation*}
\begin{CD}
Z' @>\ti{f}>> Z \\
@VVV @VVV \\
\mcal{Y} @>f>> \mcal{X}
\end{CD}
\end{equation*}
is $2$-commutative, where the vertical arrows are the quotient maps. Furthermore$:$
\begin{enumi}
\item Let $\chi_i \in {\rm Pic}(\mcal{Y})$ fixed for $i\in \{ 1,...,R \}$. 
Then two sets of polynomials $\{ P_{\rho} \}$ and $\{ P_{\rho}' \}$
determine $2$-isomorphic morphisms if and only if there exists 
$g\in G$
such that $P_{\rho}'=\varphi_{\rho}(g)P_{\rho}$ for any $\rho \in \De(1)$,
where $\varphi_{\rho}$ is the $\rho$-th component of $\varphi$ defined in \eqref{sests}.
\item All morphisms $f:\mcal{Y}\ra \mcal{X}$ arise in this way up to $2$-isomorphisms.
\end{enumi}
\end{theorem}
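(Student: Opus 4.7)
The overall plan is to invoke Corollary \ref{mfs}, which identifies morphisms $\cY\to\cX=\cC_{\bD}$ with $\bD$-collections on $\cY$, and then translate the data $(P_{\rho},\chi_i)$ into the data of a $\bD$-collection via the identifications ${\rm Pic}(\cY)\cong\textrm{Char}(G')$ (associating $\chi\mapsto\cL(\chi)$) and $H^0(\cY,\cL(\chi))\cong S^{\cY}_{\chi}$ from \cite{BH}, \cite{DH}. Given polynomials $P_{\rho}\in S^{\cY}_{\chi_{\rho}}$ and characters $\chi_i$ satisfying (a), (b), I would set $L_{\rho}:=\cL(\chi_{\rho})$, $M_i:=\cL(\chi_i)$, and let $u_{\rho}:=P_{\rho}$ under the section/polynomial identification. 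The first half of (a), $\sum_{\rho}\chi_{\rho}\ot a_{\rho}=0$ in ${\rm Pic}(\cY)\ot N$, is equivalent to $\ot_{\rho}L_{\rho}^{\ot\lan m,a_{\rho}\ran}$ being trivial for every $m\in M$; pick trivializations $c_{m_j}$ on a basis of $M$ and extend multiplicatively to produce $c_m$ satisfying Definition \ref{deltacoll}(1). The second half of (a) trivializes $\ot_{\rho}L_{\rho}^{\ot b_{i\rho}}\ot M_i^{\ot r_i}$, yielding the $d_i$. Condition (b) matches Definition \ref{deltacoll}(2) once one reads the description of $Z\subset\IC^{\De(1)}$ in \cite{BCS} as the locus cut out by the \emph{allowed} vanishing patterns of the $u_{\rho}$. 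This produces a $\bD$-collection on $\cY$, hence the desired $f$. The asserted $2$-commutativity of the square follows by pulling both compositions back to $Z'$: the associated $\bD$-collection on $Z'$ along $Z'\to\cY\to\cX$ has trivial line bundles with sections $P_{\rho}(z)$, while along $Z'\xrightarrow{\ti{f}}Z\to\cX$ one pulls back the tautological $\bD$-collection on $Z$ (with coordinate sections $z_k$) by $\ti f$, producing the same data.

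For (ii), I would run the correspondence backwards: any $f:\cY\to\cX$ is given by a $\bD$-collection $(L_{\rho},u_{\rho},c_m,M_i,d_i)/\cY$; setting $\chi_{\rho}:=[L_{\rho}]$, $\chi_i:=[M_i]$, and letting $P_{\rho}\in S^{\cY}_{\chi_{\rho}}$ correspond to $u_{\rho}$, the existence of the isomorphisms $c_m$ and $d_i$ forces condition (a) on ${\rm Pic}(\cY)$, while Definition \ref{deltacoll}(2) translates back into (b). For (i), fix $\chi_{\rho}$, $\chi_i$, $c_m$, $d_i$. Two collections differing only in the sections $P_{\rho},P_{\rho}'$ are isomorphic over ${\rm id}_{\cY}$ precisely when there exist $\ga_{\rho}:L_{\rho}\to L_{\rho}$ and $\de_i:M_i\to M_i$ satisfying the compatibilities of Definition \ref{deltacoll2}. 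Since the coarse moduli of $\cY$ is complete and irreducible, $H^0(\cY,\cO_{\cY}^*)=\IC^*$, so $\ga_{\rho}$ and $\de_i$ are scalar multiplications by $\lam_{\rho},\mu_i\in\IC^*$. The section condition yields $\lam_{\rho}P_{\rho}'=P_{\rho}$, while compatibility with $c_m$ and $d_i$ reads as $\prod_{\rho}\lam_{\rho}^{\lan m,a_{\rho}\ran}=1$ and $\prod_{\rho}\lam_{\rho}^{b_{i\rho}}\mu_i^{r_i}=1$ respectively, which is exactly $\psi(\lam,\mu)=1$ in the notation of \eqref{psi}, i.e. $(\lam,\mu)\in G=\ker\psi$. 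Choosing $g$ to be this element (with the sign convention from \eqref{azione}) gives $P_{\rho}'=\varphi_{\rho}(g)P_{\rho}$.

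The main obstacle I expect is the careful bookkeeping on two points: first, showing that the \emph{multiplicative} family $\{c_m\}_{m\in M}$ can be chosen to satisfy Definition \ref{deltacoll}(1), which requires the Pic-level vanishing in (a) to be upgraded from ``each piece is trivializable'' to a coherent family of trivializations (resolved by extending a basis); and second, verifying that the full set of compatibilities in Definition \ref{deltacoll2} collapses to exactly the system of equations defining $G=\ker(\psi)$, so that the ambiguity in (i) is parametrized by $G$ and not by a larger group. The rest of the argument is a dictionary translation between \cite{BCS}'s presentation $\cX=[Z/G]$ and our $\bD$-collections, which is made by the proof of Theorem \ref{mainth}.
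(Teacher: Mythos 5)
Your proposal follows essentially the same route as the paper: translate $(P_{\rho},\chi_i)$ into a $\bD$-collection on $\cY$ via the identifications ${\rm Pic}(\cY)\cong{\rm Char}(G')$ and $H^0(\cY,\cL(\chi))\cong S^{\cY}_{\chi}$, apply Corollary \ref{mfs}, and for (i) reduce the $2$-isomorphism condition to membership in $\ker(\psi)=\varphi(G)$ using that units on $\cY$ are constants (the paper asserts this; your appeal to completeness is the right justification). For the coherence of the $c_m$, the paper avoids your basis-extension step by fixing once and for all canonical isomorphisms $\cL(\chi_1)\ot\cL(\chi_2)\cong\cL(\chi_1\cd\chi_2)$, so that $\prod_{\rho}\chi_{\rho}^{\lan m,a_{\rho}\ran}=1$ yields a multiplicative family $c_m^{\rm can}$ for free; both devices work.

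The one step you gloss over is in (ii). Reading off $\chi_{\rho}:=[L_{\rho}]$, $\chi_i:=[M_i]$ and $P_{\rho}$ from the collection of $f$ gives data satisfying (a) and (b), but that only shows $f$ corresponds to a collection of the form $(\cL(\chi_{\rho}),P_{\rho},\ti{c}_m,\cL(\chi_i),\ti{d}_i)$ with some trivializations $\ti{c}_m,\ti{d}_i$, whereas the morphism "arising in this way" is built from the canonical ones $c_m^{\rm can},d_i^{\rm can}$. One still has to produce an isomorphism between these two collections: the discrepancies $((c_m^{\rm can})^{-1}\circ\ti{c}_m,(d_i^{\rm can})^{-1}\circ\ti{d}_i)$ form a point of $(\IC^*)^d\times(\IC^*)^R$, and the surjectivity of $\psi$ in the exact sequence \eqref{sests} lets one lift it to $((\ga_{\rho}),(\de_i))$ giving the required morphism of $\bD$-collections (at the harmless cost of rescaling the $P_{\rho}$). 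Without this normalization, (ii) is not complete; it is exactly the extra argument the paper supplies.
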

\begin{proof} Let us choose a representative ${\mcal{L}}(\chi)$
of the class $\chi \in {\rm Pic}(\mcal{Y})$ for any $\chi$. Note that there
are canonical isomorphisms ${\mcal{L}}(\chi_1) \ot {\mcal{L}}(\chi_2) \cong {\mcal{L}}(\chi_1 \cd \chi_2)$.

Let $\{ P_{\rho} | \rho \in \De(1)\}$ and $\{\chi_i | i \in \{1,...,R\}\}$ 
satisfying (a) and (b). Then
$\prod_{\rho}\chi_{\rho}^{\lan m,a_{\rho} \ran} = 1$, for any $m\in M$. 
As a consequence there are canonical isomorphisms
$$
c_m^{\rm can}: \ot_{\rho}{\mcal{L}}(\chi_{\rho})^{\ot \lan m,a_{\rho} \ran} \ra \uIC,
\qquad m\in M,
$$
and
$$
d_i^{\rm can} : \ot_{\rho}{\mcal{L}}(\chi_{\rho})^{\ot b_{i\rho}}\ot {\mcal{L}}(\chi_i)^{\ot r_i} \ra \uIC,
\qquad i\in \{1,...,R\}.
$$
It follows that $({\mcal{L}}(\chi_{\rho}),P_{\rho},c_m^{\rm can},{\mcal{L}}(\chi_i),d_i^{\rm can})/\cY$
is a $\mathbf{\De}$-collection, therefore it corresponds to a morphism $f:\mcal{Y}\ra\mcal{X}$.
The commutativity of the diagram follows easily.

Let now $\{ P_{\rho} \}$ and $\{ P_{\rho}' \}$ be two sets of polynomials defining $2$-isomorphic morphisms. 
Then there are isomorphisms 
$$
\ga_{\rho}: {\mcal{L}}(\chi_{\rho}) \ra {\mcal{L}}(\chi_{\rho})\quad \mb{and} \quad
\de_i : {\mcal{L}}(\chi_{i}) \ra {\mcal{L}}(\chi_{i}),
$$
such that
$$
\ga_{\rho}(P_{\rho})=P_{\rho}'\; , \quad  \ot_{\rho}\ga_{\rho}^{\ot \lan m,a_{\rho} \ran}={\rm id} \; , \quad
\ot_{\rho}\ga_{\rho}^{\ot b_{i\rho}}\ot \de_i^{\ot r_i}={\rm id}
$$
 for any $\rho$, $m$ and $i$.
The $\ga_{\rho}$'s and  $\de_i $'s are multiplications by non-zero complex numbers
which we denote by the same symbols. The previous conditions means that 
$\left( (\ga_{\rho})_{\rho},(\de_i)_{i}  \right)\in {\rm ker}(\psi)$, where $\psi$ is defined in \eqref{sests}. 
So there exists  $g\in G$ such that $\varphi (g) = \left( (\ga_{\rho})_{\rho},(\de_i)_{i}  \right)$.

To conclude the proof, let $f:\cY \ra \cX$ be a morphism, and let
 $({\mcal{L}}_{\rho},u_{\rho},c_m,M_i,d_i)/\cY$ be the corresponding $\mathbf{\De}$-collection
(Corollary \ref{mfs}).
Then there is a morphism
$$
({\mcal{L}}_{\rho},u_{\rho},c_m,M_i,d_i) \ra ({\mcal{L}}(\chi_{\rho}),P_{\rho},\ti{c}_m,{\mcal{L}}(\chi_i),\ti{d}_i)
$$
 for some $\chi_{\rho},\chi_i\in {\rm Pic}(\mcal{Y})$. Clearly
the $P_{\rho}$'s, $\chi_{\rho}$'s and $\chi_i$'s satisfy conditions (a) and (b). Let us now consider the automorphisms
$(c_m^{\rm can})^{-1}\circ \ti{c}_m $ and $(d_i^{\rm can})^{-1}\circ \ti{d}_i $ of 
$\ot_{\rho}{\mcal{L}}(\chi_{\rho})^{\ot \lan m,a_{\rho} \ran}$ and 
$\ot_{\rho}{\mcal{L}}(\chi_{\rho})^{b_{i\rho}}\ot {\mcal{L}}(\chi_i)^{r_i}$ respectively.
They correspond to an element \\
$\left( ((c_m^{\rm can})^{-1}\circ \ti{c}_m)_m,((d_i^{\rm can})^{-1}\circ \ti{d}_i)_i \right)\in
(\IC^*)^d \times (\IC^*)^R$. 
Let now $\left( (\ga_{\rho})_{\rho},(\de_i)_i \right)\in (\IC^*)^n \times (\IC^*)^R$ such that 
$\psi \left( (\ga_{\rho})_{\rho},(\de_i)_i \right) =
\left( ((c_m^{\rm can})^{-1}\circ \ti{c}_m)_m,((d_i^{\rm can})^{-1}\circ \ti{d}_i)_i \right)$.
Then \eqref{psi} implies that 
$$
(\ga_{\rho},\de_i): ({\mcal{L}}(\chi_{\rho}),P_{\rho},\ti{c}_m,{\mcal{L}}(\chi_i),\ti{d}_i)
\ra  ({\mcal{L}}(\chi_{\rho}),P_{\rho},{c_m^{\rm can}},{\mcal{L}}(\chi_i),d_i^{\rm can})
$$
is a morphism of $\bD$-collections. From Corollary \ref{mfs} it follows that the morphism
associated to $ ({\mcal{L}}(\chi_{\rho}),P_{\rho},c_m^{\rm can},{\mcal{L}}(\chi_i),d_i^{\rm can})$
is $2$-isomorphic to $f$. \end{proof}

\section{Relations with logarithmic geometry}
In \cite{Iwanari} toric stacks have been defined in a different way
than in  \cite{BCS}, in this Section we briefly compare the two approaches.
We work over the field of complex numbers $\IC$,
keeping in mind that the construction of \cite{Iwanari} works
over a general field. Over a general field
toric stacks in the sense of \cite{Iwanari} are Artin stacks
of finite type with finite diagonal.

Let $\bD = \{ N, \De, a_{\rho} \}$ be a combinatorial data as in the Introduction,
note that here we assume $R=0$. In \cite{Iwanari}, for any scheme $Y$,
 the author considers
triples $(\pi: \cS \ra \cO_Y, \al :\cM \ra \cO_Y, \eta:\cS \ra \cM)$, where
\begin{enumerate}
\item $\cS$ is an \'etale sheaf of submonoids of the constant sheaf $M$
on $Y$ such that for any point $y\in Y$ the Zariski stalk $\cS_y$
is isomorphic to the \'etale stalk $\cS_{\bar y}$,
\item $\pi: \cS \ra \cO_Y $ is a morphism of monoids, where $\cO_Y$
is  a monoid with respect to the multiplication,
\item for any $s\in \cS$, $\pi(s)$ is invertible if and only if 
$s$ is invertible,
\item for any point $y\in Y$, there exists some $\si \in \De$
such that $\cS_{\bar y} = \si^{\vee}\cap M$,
\item $\al :\cM \ra \cO_Y$ is a fine logarithmic structure on $Y$ in the sense of \cite{Kato},
\item $\eta:\cS \ra \cM$ is an homomorphism of sheaves
of monoids such that $\pi = \al \circ \eta$, and for each generic point
$\bar{y}$, $\bar{\eta}: \left( \cS/\cO ^*_Y  \right)_{\bar{y}} \ra \left( \cM/\cO ^*_Y  \right)_{\bar{y}}$
is isomorphic to the $\bD$-\textit{free resolution} at $\bar{y}$.
\end{enumerate}
The notions of \textit{minimal free resolution} and of $\bD$-\textit{free resolution}
are the most important notions in the definition of \cite{Iwanari},
we refer to \cite{Iwanari} for more details.\\
One can define morphisms between two such triples.
Let $\cL_{\bD}$ be the resulting category. The  functor
\begin{equation}\label{log}
\cL_{\bD} \ra \rm{(Sch)}
\end{equation}
which forget the data $\pi: \cS \ra \cO_Y$, $\al :\cM \ra \cO_Y$
and $ \eta:\cS \ra \cM$ is a CFG. \\
Then, Theorem 2.7  in \cite{Iwanari} states that \eqref{log} is a smooth
DM-stack of finite type and separated, with coarse 
moduli space the toric variety associated to $N$ and $\De$.

\vspace{0.5cm}

Let $\bS$ be the stacky fan in the sense of \cite{BCS}
associated to $\bD$, and let $\cX_{\bS}$ be the 
toric stack as defined in \cite{BCS}.
The Proposition below follows from  our Theorem \ref{mainth}
and Theorem 1.4 in \cite{Iwanari2}.
\begin{proposition}
Under the previous hypothesis, there is an isomorphism of stacks
$$
\cL_{\bD} \cong \cX_{\bS}.
$$
\end{proposition}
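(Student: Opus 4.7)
The plan is to chain two equivalences. With $R=0$, a $\bD$-collection reduces to a triple $(L_{\rho}, u_{\rho}, c_m)/Y$, and Theorem \ref{mainth} above identifies $\cC_{\bD}$ with $\cX_{\bS}$. It therefore suffices to exhibit a natural equivalence $\cL_{\bD} \cong \cC_{\bD}$; Theorem 1.4 of \cite{Iwanari2} is precisely this kind of statement, and composing the two isomorphisms will yield the result.

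To construct the comparison functor $\Phi : \cL_{\bD} \ra \cC_{\bD}$ explicitly, I would start from a triple $(\pi : \cS \ra \cO_Y,\, \al : \cM \ra \cO_Y,\, \eta : \cS \ra \cM)$ and extract the line bundles and sections via the standard short exact sequence $1 \ra \cO_Y^* \ra \cM^{\mathrm{gp}} \ra \overline{\cM}^{\mathrm{gp}} \ra 1$: every class $\xi \in \overline{\cM}^{\mathrm{gp}}$ gives a line bundle $\cL_\xi$ (the associated $\cO_Y^*$-torsor), and any lift of $\xi$ to $\cM$ composed with $\al$ produces a section. Condition (6) (the $\bD$-free resolution at generic points) singles out a canonical class $[\rho] \in \overline{\cS}^{\mathrm{gp}}$ for every $\rho \in \De(1)$; set $L_{\rho} := \cL_{[\rho]}$ and let $u_{\rho}$ be the section induced by $\pi$. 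Since every $m \in M$ corresponds to an invertible element of $\cS^{\mathrm{gp}}$, the relation $\sum_{\rho} \lan m, a_{\rho}\ran [\rho] = 0$ in $\overline{\cS}^{\mathrm{gp}}$ determines the trivialization $c_m : \ot_{\rho} L_{\rho}^{\ot \lan m, a_{\rho}\ran} \ra \uIC$, with the multiplicativity $c_{m+m'} = c_m \ot c_{m'}$ built in.

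To produce an inverse $\Psi : \cC_{\bD} \ra \cL_{\bD}$, starting from $(L_{\rho}, u_{\rho}, c_m)/Y$, I would assemble a presheaf of monoids on the \'etale site by considering tensor products $\ot_{\rho} L_{\rho}^{\ot k_{\rho}}$ with $k_{\rho}\geq 0$, equipped with their natural sections, modulo the relations coming from the isomorphisms $c_m$; its associated log structure provides $\cM$ and $\al$, while the classes of the $(L_{\rho}, u_{\rho})$ generate the submonoid $\cS$. Descent for fine log structures then shows that $\Phi \circ \Psi$ and $\Psi \circ \Phi$ are canonically $2$-isomorphic to the identities.

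The main obstacle is the equivalence at the level of local combinatorial data: condition (4) in the definition of objects of $\cL_{\bD}$ (the stalk $\cS_{\bar y}$ is of the form $\si^{\vee} \cap M$ for some $\si \in \De$) together with the $\bD$-free resolution condition (6) must correspond precisely to the nondegeneracy condition (2) of Definition \ref{deltacoll} (at each $y$ some $\si \in \De_{\max}$ makes $u_{\rho}(y)\neq 0$ for all $\rho \notin \si$), and the local isomorphism type of $\cS$ must match the combinatorial pattern of rays containing $y$. This translation between the local monoidal structure of a fine log scheme and the nondegeneracy of the sections is the technical heart of the comparison, but it is exactly what Theorem 1.4 of \cite{Iwanari2} establishes. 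Granted that identification, the result follows by invoking that theorem and composing with Theorem \ref{mainth} of the present paper.
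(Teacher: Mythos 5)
Your argument follows essentially the same route as the paper: the paper's proof is precisely the combination of Theorem \ref{mainth} (giving $\cC_{\bD}\cong\cX_{\bS}$) with Theorem 1.4 of \cite{Iwanari2} (giving the comparison with $\cL_{\bD}$), which is exactly the chain of equivalences you invoke. The additional sketch of the comparison functors via the exact sequence $1\ra\cO_Y^*\ra\cM^{\mathrm{gp}}\ra\overline{\cM}^{\mathrm{gp}}\ra 1$ is a reasonable elaboration of what the cited theorem encapsulates, but the logical structure of your proof coincides with the paper's.
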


\section{Appendix}
This appendix is aimed to give a
correspondence between the stacks $\cC_{\bD}$ defined in this paper 
and the ``smooth toric DM stacks''  defined in \cite{FMN}.
Let us recall from \cite{FMN} the following 
\begin{definition}
A ``smooth toric DM stack'' is a smooth separated DM-stack $\cX$
together with an open immersion of a Deligne-Mumford torus
$\cT \ra \cX$ with dense image such that the action of $\cT$
on itself extends to an action on $\cX$.
\end{definition}

\vspace{0.1cm}

We have the following result that can be seen as a generalization
of the functor of a smooth toric variety \cite{cox}.
\begin{theorem}\label{relfmn}
For any combinatorial data $\bD$, $\cC_{\bD}$ is a ``smooth toric DM stack''.
Conversely, for any ``smooth toric DM stack'' $\cX$ there exists a
combinatorial data  $\bD$
and an isomorphism $\cX \cong \cC_{\bD}$.
\end{theorem}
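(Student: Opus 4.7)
The strategy is to handle the two directions separately, using Theorem \ref{mainth}, Proposition \ref{dt} and Theorem \ref{classificazione} as the main tools, together with the classification of smooth toric DM stacks from \cite{FMN}.

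For the first direction, let $\bD$ be an arbitrary combinatorial data. Theorem \ref{mainth} already gives that $\cC_{\bD}$ is a smooth separated DM-stack. I would then verify the remaining three ingredients of the FMN definition. First, the Picard stack $\cT = \sqrt[r_1]{\uIC} \times_T \cdots \times_T \sqrt[r_R]{\uIC}$ introduced in Section 4 is a DM torus in the sense of \cite{FMN}: indeed it is a gerbe over the ordinary torus $T = \mathrm{Spec}(\IC[M])$ banded by the finite abelian group $\mu_{r_1}\times\cdots\times\mu_{r_R}$, which is exactly the structure FMN ask for. Second, by Proposition \ref{dt} the morphism $\cT \ra \cC_{\bD}$ is open with dense image in the small \'etale topology, and it is representable (hence an immersion) since on automorphisms it induces the inclusion of the banding group into the automorphism group of the generic point of $\cC_{\bD}$. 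Third, the action $\mfa$ of Section 4 extends the multiplication $\mfm$ on $\cT$ in the sense required by \cite{FMN}, the compatibility again being the content of Proposition \ref{dt}.

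For the converse direction, start with a smooth toric DM-stack $\cX$ in the FMN sense, with DM torus $\cT_{\cX} \hookrightarrow \cX$. I would extract a combinatorial data $\bD$ in four steps. The rigidification $\cX_{\mathrm{rig}}$ of $\cX$ with respect to its generic stabilizer is again a smooth toric DM-stack, but now with torus (not just DM torus) as dense open substack, so by the toric variety version of this correspondence (and the results of \cite{FMN}) its coarse moduli space is a simplicial toric variety, which delivers the lattice $N$, the fan $\De$ and the ray generators $a_{\rho}$. The generic stabilizer of $\cX$ is a finite abelian group; by the structure theorem it has a canonical presentation $\prod_{i=1}^R \mu_{r_i}$ with $r_1 \mid r_2 \mid \cdots \mid r_R$, which yields the integers $r_i$. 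Finally, the gerbe $\cX \ra \cX_{\mathrm{rig}}$ is banded by $\prod_i \mu_{r_i}$, so by the same cohomological decomposition used in Proposition \ref{ger} it is a fibered product of $r_i$-th root gerbes of certain line bundles $\mathfrak{W}_i \in \mathrm{Pic}(\cX_{\mathrm{rig}})$; using the presentation of $\mathrm{Pic}(\cC_{\bD_{\mathrm{rig}}})$ recalled in Theorem \ref{classificazione}, each $\mathfrak{W}_i$ is represented by a class $\sum_{\rho} b_{i\rho}\, e_{\rho}^*$ in $(\IZ^{\De(1)})^*$, providing the integers $b_{i\rho}$. With $\bD := \{N,\De,a_{\rho},r_i,b_{i\rho}\}$ so defined, the isomorphism $\cX \cong \cC_{\bD}$ is then obtained by combining Proposition \ref{ger} (which identifies $\cC_{\bD} \ra \cC_{\bD_{\mathrm{rig}}}$ with the fibered product of root gerbes determined by the $b_{i\rho}$'s) with the isomorphism $\cX_{\mathrm{rig}} \cong \cC_{\bD_{\mathrm{rig}}}$ coming from the first part.

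The main obstacle will be the converse direction, specifically the verification that the FMN intrinsic data of a smooth toric DM-stack are genuinely encoded by the combinatorial data $\bD$. Concretely, one must show that the gerbe $\cX \ra \cX_{\mathrm{rig}}$ is classified by a cohomology class in $H^2(\cX_{\mathrm{rig}}, \prod_i\mu_{r_i})$ coming from $R$ line bundles via the splitting of Proposition \ref{ger}, and that any such collection of line bundles actually lifts to integral $b_{i\rho}$'s in the presentation of $\mathrm{Pic}(\cX_{\mathrm{rig}})$. Both points follow from the exact sequence defining $\mathrm{Pic}(\cC_{\bD_{\mathrm{rig}}})$ recalled in the proof of Theorem \ref{classificazione}, but this is the step where the cleanest route is to invoke the FMN classification of smooth toric DM stacks and match it with Theorem \ref{classificazione} term by term.
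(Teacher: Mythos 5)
Your overall architecture matches the paper's: the first direction is exactly Theorem \ref{mainth} plus Proposition \ref{dt} (and that part of your argument is fine), and your converse proceeds, as the paper does, by rigidifying and then decomposing the gerbe $\cX \ra \cX_{\rm rig}$ into a fibered product of root gerbes of line bundles expanded in terms of the classes $\cO(\mathcal{D}_{\rho})$ generating ${\rm Pic}(\cX_{\rm rig})$, which is the paper's Step 3 almost verbatim.

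There is, however, a genuine gap in how you extract the $a_{\rho}$'s. You assert that the coarse moduli space of $\cX_{\rm rig}$, being a simplicial toric variety, ``delivers the lattice $N$, the fan $\De$ and the ray generators $a_{\rho}$''. The coarse space determines only $N$, $\De$ and the primitive generators $n_{\rho}$ of the rays; the $a_{\rho}$'s are in general proper multiples $a_{\rho}=\al_{\rho}\cd n_{\rho}$, and the multiplicities $\al_{\rho}$ record the orbifold structure of $\cX_{\rm rig}$ in codimension one, which the coarse space cannot see (for instance $\sqrt[2]{\{0\}/\IA^1_{\IC}}$ and $\IA^1_{\IC}$ have the same coarse space and the same fan, but correspond to $a_{\rho}=2$ and $a_{\rho}=1$ respectively). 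The paper fills this in with two steps your proposal omits: it first identifies the canonical stack $X^{\rm can}$ over $X$ with $\cC_{\{N,\De,n_{\rho}\}}$ via the universal property of canonical stacks, and then invokes the structure theorem of \cite{FMN} (Thm.\ 5.2) expressing $\cX_{\rm rig}$ as a fibered product over $X^{\rm can}$ of root stacks of the torus-invariant divisors; it is this second step that produces the $\al_{\rho}$'s and hence the $a_{\rho}$'s. Relatedly, in your final step you need not only the splitting of Proposition \ref{ger} but also the fact that every $\mu_{r_i}$-banded gerbe over $\cX_{\rm rig}$ is the root gerbe of \emph{some} line bundle; you correctly flag this as the main obstacle, and the paper settles it by citing \cite{FMN}, Cor.\ 6.27, rather than by the cohomological argument you sketch. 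Once these two inputs from \cite{FMN} are made explicit, your argument closes up and coincides with the paper's proof.
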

\begin{proof}
$\cC_{\bD}$ is a smooth and separated DM-stack (Thm. \ref{mainth}),
moreover there is an open immersion of a Picard stack
$\cT \ra \cC_{\bD}$ with dense image and such that the multiplication 
of $\cT$ extends to an action on $\cC_{\bD}$ (Prop. \ref{dt}).
As our $\cT$ is infact a Deligne-Mumford torus, the first part of the theorem
follows.

We divide the proof of the second part in three steps.\\
Step 1. Let $X$ be the coarse moduli space of $\cX$. Since $X$
is a simplicial toric variety, it is determined by a free abelian group 
$N$ and a fan $\De \subset N_{\IQ}$.  Moreover, $X$ has quotient singularities
by finite groups,
hence  it is the coarse moduli space of a ``canonical''
smooth DM-stack which we denote with $X^{\rm can}$ (we refer to \cite{FMN},
Def. 4.4, for the definition of ``canonical'' stack).  Let now $\cC_{\{N,\De, n_{\rho}\}}$ be the stack
associated to the combinatorial data $\{ N,\De, n_{\rho}\}$, 
where $n_{\rho}$ is the generator of the semigroup $\rho \cap N$ for any $\rho \in \De (1)$.
We have that $\cC_{\{N,\De, n_{\rho}\}} \cong X^{\rm can}$. 
Indeed, $\cC_{\{N,\De, n_{\rho}\}} \cong \cX_{\bS}\times (\IC^*)^k$,
where $\cX_{\bS}$ is a toric DM-stack associated to $\{ N,\De,n_{\rho} \}$ and 
$k$ is a natural number (Thm. \ref{mainth}); moreover $\cX_{\bS}\times (\IC^*)^k$
 is a ``canonical'' stack over $X$ (this can be seen using an orbifold atlas 
for $\cX_{\bS}$ as given for example by Prop. 4.3 in \cite{BCS}). 
The universal property of  ``canonical'' stacks implies that 
$X^{\rm can}\cong  \cC_{\{N,\De, n_{\rho}\}}$.  

\medskip

\noindent Step 2. Let now $\cX_{\rm rig}$ be the rigidification of $\cX$
by the generic stabilizer. Then, $\cX_{\rm rig}$ is a toric orbifold
 obtained from $X^{\rm can}$ by roots of effective Cartier divisors 
 (\cite{FMN} Thm. 5.2). More precisely, let $\rho_1,...,\rho_n $ be the $1$-dimensional cones
of $\De$, and 
let $D_{\rho_1},...,D_{\rho_n} \subset X^{\rm can}$ be the corresponding
torus invariant divisors. There are natural numbers $\al_{\rho_1},...,\al_{\rho_n}$
such that
$$
\cX_{\rm rig} \cong  \sqrt[\al_{\rho_1}]{D_{\rho_1}/X^{\rm can}}\times_{X^{\rm can}}...
\times_{X^{\rm can}}\sqrt[\al_{\rho_1}]{D_{\rho_1}/X^{\rm can}},
$$  
where $\sqrt[\al]{D/\cX}$ denotes the $\al$-th root of the effective Cartier
divisor $D$ in the smooth algebraic stack $\cX$. 
Set $a_{\rho}:=\al_{\rho}\cd n_{\rho}\in N$, for any $\rho \in \De(1)$. 
Then, under the identification of $X^{\rm can}$ with $\cC_{\{N,\De, n_{\rho}\}}$,
we obtain an isomorphism
$$
\cX_{\rm rig} \cong \cC_{\{N,\De, a_{\rho}\}}.
$$

\medskip

\noindent Step 3. $\cX$ is a gerbe over $\cX_{\rm rig}$ 
isomorphic to 
$$
  \sqrt[r_1]{\cL_1/\cX_{\rm rig}}\times_{\cX_{\rm rig}}...
\times_{\cX_{\rm rig}}\sqrt[r_R]{\cL_{R}/\cX_{\rm rig}},
$$
for some line bundles $\cL_1,...,\cL_R$ over $\cX_{\rm rig}$ and for some
natural numbers $r_1,...,r_R$ (\cite{FMN}, Cor. 6.27).
The Picard group of $\cX_{\rm rig}$ is generated by the 
classes of line bundles $\cO_{\cX_{\rm rig}}(\mathcal{D}_{\rho})$
associated to the torus invariant divisors $\mathcal{D}_{\rho}\subset \cX_{\rm rig}$
 for $\rho \in \De(1)$.
Hence, for any $i\in \{ 1,...,R\}$, there are integers
$b_{i\rho}$,  $\rho \in \De(1)$, with $\cL_i\cong \cO(\sum_{\rho}b_{i\rho}\mathcal{D}_{\rho})$.
It follows that 
$$
\cX \cong \cC_{\bD}
$$
with $\bD=\{N,\De,a_{\rho},b_{i\rho},r_1,...,r_R\}$.  
\end{proof}

%%%%%%%%%%%% References %%%%%%%%%%%%%

\end{document}